\title{Graph properties, graph limits and entropy}
\date{12 December, 2013}  
\author{Hamed Hatami}
\address{School of Computer Science, McGill University, Montreal, Canada.}
\email{hatami@cs.mcgill.ca}
\author{Svante Janson}
  \address{Department of Mathematics, Uppsala University, 
PO Box 480, SE-751 06 Uppsala, Sweden.}
  \email{svante.janson@math.uu.se}
\author{Bal{\'a}zs Szegedy}
\address{University of Toronto, Department of Mathematics, St George St. 40,
Toronto, ON,
M5R 2E4, Canada.}
\email{szegedy@math.toronto.edu}
\thanks{%
HH supported by an NSERC, and an FQRNT grant.
SJ  supported by the Knut and Alice Wallenberg Foundation.
%}\thanks{
This research was mainly done during the workshop 
\emph{Graph limits, homomorphisms and structures II} at
Hrani{\v c}n{\'i} Z{\'a}me{\v c}ek,  Czech Republic, 2012.}
\numberwithin{equation}{section}
\newtheorem{theorem}{Theorem}[section]
\newtheorem{lemma}[theorem]{Lemma}
\newtheorem{corollary}[theorem]{Corollary}
\theoremstyle{definition}
\newtheorem{example}[theorem]{Example}
\theoremstyle{remark}
\newtheorem{remark}[theorem]{Remark}
\newcommand{\Ent}{{\operatorname{Ent}}}
\newcommand{\rand}{{\operatorname{rand}}}
\newcommand{\supp}{{\operatorname{supp}}}
\def\Ex {{\mathbb E}}
\def\PP{\mathcal{P}}
\def\BB{\mathcal{B}}
\def\QQ{\mathcal{Q}}
\def\UU{\mathcal{U}}
\def\LL{\mathcal{L}}
\newenvironment{romenumerate}[1][0pt]{% optional argument changes indentation
\addtolength{\leftmargini}{#1}\begin{enumerate}% gives (i), (ii) etc.
 }{\end{enumerate}}
\newcounter{oldenumi}
\newcounter{thmenumerate}
\newcounter{romxenumerate}   %less indented than standard.
\newcounter{xenumerate}   %no left indentation; thus wider lines
\newcommand{\refT}[1]{Theorem~\ref{#1}}
\newcommand{\refL}[1]{Lemma~\ref{#1}}
\newcommand{\refR}[1]{Remark~\ref{#1}}
\newcommand{\refS}[1]{Section~\ref{#1}}
\newcommand{\refE}[1]{Example~\ref{#1}}
\newcommand{\refand}[2]{\ref{#1} and~\ref{#2}}
\xdef\klockan{\the\count1.0\the\count255}
\xdef\klockan{\the\count1.\the\count255}\fi
\newcommand\set[1]{\ensuremath{\{#1\}}}
\newcommand\bigset[1]{\ensuremath{\bigl\{#1\bigr\}}}
\newcommand\lrset[1]{\ensuremath{\left\{#1\right\}}}
\newcommand\bigpar[1]{\bigl(#1\bigr)}
\newcommand\Bigpar[1]{\Bigl(#1\Bigr)}
\newcommand\lrpar[1]{\left(#1\right)}
\newcommand\bigabs[1]{\bigl|#1\bigr|}
\def\rompar(#1){\textup(#1\textup)}    % usage: \rompar(...)
\newcommand\xfrac[2]{#1/#2}
\def\xexp(#1){e^{#1}}
\newcommand\floor[1]{\lfloor#1\rfloor}
\newcommand\frax[1]{\{#1\}}
\newcommand\ntoo{\ensuremath{{n\to\infty}}}
\newcommand\mtoo{\ensuremath{{m\to\infty}}}
\newcommand\norm[1]{\|#1\|}
\newcommand\punkt{.\spacefactor=1000}    % om problem!
\newcommand\iid{i.i.d\punkt}    
\newcommand\ie{i.e\punkt}
\newcommand\eg{e.g\punkt}
\newcommand\cf{cf\punkt}
\newcommand{\aex}{a.e\punkt}
\newcommand{\aexx}{\aex}
\newcounter{CC}
\newcounter{cc}
\newcommand\E{\operatorname{\mathbb E{}}}
\newcommand\gd{\delta}
\newcommand\gG{\Gamma}
\newcommand\gs{\sigma}
\newcommand\eps{\varepsilon}
\newcommand\cC{\mathcal C}
\newcommand\cL{{\mathcal L}}
\newcommand\cP{\mathcal P}
\newcommand\cQ{\mathcal Q}
\newcommand\cU{{\mathcal U}}
\newcommand\cW{\mathcal W}
\newcommand\etta{\boldsymbol1}
\renewcommand{\=}{:=}
\newcommand\oi{[0,1]}
\newcommand\dd{\,\mathrm{d}}
\newcommand\lhs{left-hand side}
\newcommand\rhs{right-hand side}
\newcommand\logb{\log_2}
\newcommand{\tind}{t_{\mathrm{ind}}}
\newcommand{\cuq}{\overline{\cU}}
\newcommand{\cqq}{\overline{\cQ}}
\newcommand{\cuoo}{\widehat{\cU}}
\newcommand{\cqoo}{\widehat{\cQ}}
\newcommand\vv[1]{|#1|}
\newcommand\ql{\cQ^L}
\newcommand\W{\gG}
\newcommand\tG{\widetilde G}
\newcommand\HH{\Ent}
\renewcommand\Pr{\operatorname{\mathbb P{}}}
\renewcommand\P{\Pr}
\newcommand\hx{h^*}
\newcommand\UB{B_\gd}
\newcommand\wwrs{W^*_{r,s}}
\newcommand\wwx[1]{W^*_{#1}}
\newcommand\cPP{\overline\cP}
\newcommand\bW{\overline W}
\newcommand\gdcut{\delta_\square}
\newcommand\dcut{d_\square}
\newcommand\hN{\widehat{N}}
\newcommand\wxxx{W^{**}}
\newcommand\wxxxa{W^{**}_a}
\begin{document}

\begin{abstract}
We study the relation between the growth rate of a graph property and the
entropy of the
graph limits that arise from graphs with that property.
In particular, for hereditary classes we obtain a new description of the
colouring number, which by well-known results describes the rate of growth.

We study also random graphs and their entropies. We show, for example, that
if a hereditary property has a unique limiting graphon with maximal 
entropy, then a random graph with this property, selected uniformly at
random from all such graphs with a given order, converges to this maximizing
graphon  as the order tends to infinity.
\end{abstract}

\maketitle

\section{Introduction and results} \label{sec:intro}
In recent years a theory of convergent sequences of dense graphs has been
developed, see \eg{} the book \cite{LovaszBook}.  One can 
construct a limit object for  such a sequence in the form of certain symmetric measurable functions called 
graphons. The theory of graph limits not only provides a framework for addressing some previously unapproachable questions, but also 
leads to new interesting questions. For example one can ask: Which graphons arise as limits of sequences of graphs with a given property? 
Does a  sequence of random graphs drawn from the set of graphs with a given
property converge, and if so, what is the limit graphon? 
These types of questions has been studied for certain
properties~\cite{ChatterjeeDiaconis,MR2274085,MR2573956,SJ255}. In this
article we 
study the relation between these questions, 
the entropy of graphons, and the growth rate of graph properties. 

The growth rate of graph properties has been studied extensively 
in the past, see \eg{}
\cite{Alekseev,MR1425205,
MR1769217,MR1822715,
MR2047528,
MR2252788,
MR2467814}.
The standard method 
has been to use the Szemer\'edi regularity lemma, while 
we use graph limits; this should not be surprising, since
it has been known since the introduction of graph limits that there is a
strong connection with the Szemer\'edi regularity lemma.
Some of our proofs reminisce the proofs from previous works, but in
different formulations, 
\cf{} \eg{}  Bollob\'as and Thomason~\cite{MR1425205}.

\subsection{Preliminaries}
For every natural number $n$, denote $[n]:=\{1,\ldots, n\}$. In this paper all
graphs are simple and finite. For a graph $G$, let $V(G)$ and $E(G)$,
respectively, denote the vertex set and the edge set of $G$. 
We write for convenience $|G|$ for $|V(G)|$, the number of vertices.
Let $\mathcal{U}$ denote set of all unlabelled graphs.
(These are formally defined as equivalence classes of graphs up to
isomorphisms.) Moreover for $n \ge 1$, let $\mathcal{U}_n \subset
\mathcal{U}$ denote the set of all graphs in $\mathcal{U}$ with exactly $n$
vertices.   
Sometimes we shall work with labelled graphs.  For every $n \ge 1$, denote by
$\LL_n$ the set of all graphs with vertex set $[n]$.

We recall the basic notions of graph limits, see \eg{}
\cite{MR2274085,BCLSV1,SJ209,LovaszBook} for further details.
The \emph{homomorphism density} of a graph $H$ in a graph $G$, denoted by
$t(H;G)$, is the 
probability that a uniformly random mapping $\phi:V(H) \to V(G)$ preserves
adjacencies, 
\ie{} $uv \in E(H) \implies \phi(u)\phi(v) \in E(G)$.
The \emph{induced density} of a graph $H$ in a graph $G$, denoted by
$p(H;G)$, is the 
probability that a uniformly random \emph{embedding} of the vertices of $H$ in the
vertices of $G$ is an embedding of $H$ in $G$,
\ie{}  $uv \in E(H) \iff \phi(u)\phi(v) \in E(G)$.
(This is often denoted $\tind(H;G)$. We assume $\vv{H}\le\vv{G}$ so that
embeddings exist.) 
We call a sequence of finite
graphs $\{G_i\}_{i=1}^\infty$ 
with $\vv{G_i}\to\infty$
\emph{convergent} if for every finite graph $H$,
the sequence $\{p(H;G_i)\}_{i=1}^\infty$ converges. 
(This is equivalent to $\{t(H;G_i)\}_{i=1}^\infty$ being convergent for every
finite graph $H$.)
One then may construct a completion $\cuq$ of  $\mathcal{U}$ under this
notion of convergence.  
More precisely,  $\overline{\mathcal{U}}$ is a compact
metric space which contains $\mathcal{U}$ as a dense subset;
the functionals $t(H;G)$ and $p(H;G)$ extend by continuity to $G\in\cuq$, for
each fixed graph $H$;
elements of the
complement  $\cuoo := \overline{\mathcal{U}} \setminus \mathcal{U}$
are called \emph{graph limits};
a sequence of graphs $(G_n)$ converges
to a graph limit $\Gamma$ if and only if $\vv{G_n} \to \infty$ and $p(H;G_n) \to
p(H;\Gamma)$ for every graph $H$. 
Moreover a graph limit is uniquely determined
by the numbers $p(H;\Gamma)$ for all $H \in \mathcal{U}$.

It is shown in~\cite{MR2274085} that every graph limit $\Gamma$ can be
represented by a \emph{graphon}, which is a symmetric measurable function
$W:[0,1]^2 \to [0,1]$. The set of all graphons are denoted by
$\mathcal{W}_0$.
(We do not distinguish between graphons that are equal almost everywhere.)
Given a graph $G$ with vertex set $[n]$ and adjacency matrix $A_G$, we
define the corresponding graphon $W_G: [0,1]^2 \rightarrow \{0,1\}$ as follows. Let $W_G(x,y) :=
A_G(\lceil x n \rceil,\lceil y n \rceil)$ if $x,y \in (0,1]$, and if
$x=0$ or $y=0$, set $W_G$ to $0$. It is easy to see that if $(G_n)$ is a graph
sequence that converges to a graph limit $\Gamma$, then for every graph $H$,
\begin{equation*}
  \begin{split}
p(H;\Gamma)&= \lim_{n \to \infty} p(H;G_n)
\\&
=\lim_{n \to \infty} \Ex\left[\prod_{uv \in E(H)}
W_{G_n}(X_u,X_v) \prod_{uv \in E(H)^c} (1-W_{G_n}(X_u,X_v))\right],	
  \end{split}
\end{equation*}
where $\{X_u\}_{u \in V(H)}$ are independent random variables taking values in
$[0,1]$ uniformly, and $E(H)^c=\{uv: u \neq v, uv \not\in E(H)\}$. Lov\'asz
and Szegedy \cite{MR2274085}  
showed that for every graph limit $\Gamma$, there exists a graphon $W$ such
that for every graph $H$, we have $p(H;\Gamma)= p(H;W)$ where 
\begin{equation}
  \label{phw}
p(H;W):= \Ex\left[\prod_{uv \in E(H)} W(X_u,X_v) \prod_{uv
\in E(H)^c} (1-W(X_u,X_v))\right].
\end{equation}
Unfortunately, this graphon is not unique.
We say that two graphons $W$ and $W'$ 
are \emph{(weakly) equivalent} 
if they represent the same graph
limit, \ie, if $p(H;W)=p(H;W')$ for all graphs $H$.
For example, a graphon $W(x,y)$ is evidently equivalent to $W(\gs(x),\gs(y))$
for any measure-preserving map $\sigma:[0,1] \to [0,1]$. Not every pair of
equivalent graphons is related in this way, but almost:
Borgs, Chayes and Lov\'asz \cite{BCL:unique}
proved that 
if $W_1$ and $W_2$ are two different graphons representing the same graph
limit, then there exists a third graphon $W$ and
measure-preserving maps $\sigma_i:[0,1] \to [0,1]$, $i=1,2$, such that 
\begin{equation}
 \label{eq:graphon_uniquenss}
W_i(x,y) = W(\sigma_i(x),\sigma_i(y)),
\qquad\text{for \aexx{} }x,y.
\end{equation}
(For other characterizations of  equivalent graphons, see \eg{}
\cite{BRmetrics} and \cite{SJ249}.)

The set $\cuoo$ of graph limits is thus a quotient space of the set $\cW_0$
of graphons.
Nevertheless, we shall not always distinguish between graph limits and their
corresponding graphons; it is often convenient (and customary) to let
a graphon $W$ also denote the corresponding graph limit.
For example, we may write $G_n\to W$ when a sequence of
graphs $\set{G_n}$ converges to the graph limit determined by the graphon
$W$;
%, and $W\in\cqq$, where $\cQ\subset\cU$, if there is such a sequence with
%$G_n\in\cQ$; 
similarly we say that a sequence of graphons $W_n$ converges to
$W$ in $\cW_0$ if the corresponding sequence of graph limits converges in
$\cuq$. (This makes $\cW_0$ into a topological space that is compact but not
Hausdorff.)
 
For every $n \ge 1$, a graphon $W$ defines a random graph $G(n,W) \in\LL_{n}$: 
Let $X_1,\ldots,X_n$ be an \iid{} sequence of random
variables taking values uniformly in $[0,1]$. Given $X_1,\ldots,X_n$, let $ij$
be an edge with probability $W(X_i,X_j)$, independently for all pairs $(i,j)$
with $1 \le i < j \le n$. It follows that for every $H \in \LL_n$, 
\begin{equation}
  \label{gnw}
\Pr[G(n,W) = H] = p(H;W).
\end{equation}
The distribution of $G(n,W)$ is thus the same for two equivalent graphons,
so we may define $G(n,\gG)$ for a graph limit $\gG$; this is a random graph
that also can be defined by the analogous relation
$\Pr[G(n,\gG) = H] = p(H;\gG)$ for $H\in\LL_n$.

\subsection{Graph properties and entropy\label{sec:mainI}}

A subset of the set $\mathcal{U}$ is called a \emph{graph class}. Similarly a
\emph{graph property} is a property of graphs that is invariant under graph
isomorphisms. There is an obvious one-to-one correspondence between graph
classes and graph properties and we will not distinguish between a graph
property and the corresponding class. Let $\QQ \subseteq \mathcal{U}$ be
a graph class. For every $n\ge1$, we denote by $\QQ_n\=\QQ\cap\cU_n$ 
the set of graphs in $\QQ$
with exactly $n$ vertices. 
We also consider the corresponding class of labelled graphs, and define $\ql_n$
to be the set of all graphs in $\cL_n$ that belong to $\QQ$ (when we ignore
labels).
Furthermore, we let $\overline{\QQ} \subseteq \overline{\UU}$ be
the closure of $\QQ$ in $\overline{\UU}$ and 
$\cqoo\=\cqq\cap\cuoo=\overline{\QQ}\setminus\QQ$
the set of graph limits that are limits of sequences of graphs in $\QQ$.

Define the \emph{binary entropy} function $h:[0,1] \mapsto \mathbb{R}_+$ as
\begin{equation*}
h(x)=-x \logb(x) - (1-x) \logb(1-x)  
\end{equation*}
for $x \in \oi$, with the interpretation $h(0)=h(1)=0$ so that
$h$ is continuous on $[0,1]$,
 where here and throughout the paper $\logb$ denotes
the logarithm to the base $2$. 
Note that $0\le h(x)\le1$, with $h(x)=0$
attained at $x=0,1$
and $h(x)=1$ at $x=1/2$, only.
The \emph{entropy} of a graphon $W$ is defined as
\begin{equation}
  \label{entw}
\Ent(W) := \int_0^1 \int_0^1  h(W(x,y)) \dd x \dd y.
\end{equation}
This is related to the entropy of random graphs, see 
\cite{Aldous} and \cite[Appendix D.2]{SJ249} and \eqref{sjw} below;
it has also previously been used by 
Chatterjee and Varadhan \cite{ChatterjeeVaradhan} 
and Chatterjee and Diaconis \cite{ChatterjeeDiaconis} 
to study large deviations of random graphs and exponential models of random
graphs. 
Note that it follows from the uniqueness result (\ref{eq:graphon_uniquenss})
that the entropy is a function of the underlying graph limit and  
it does not depend on the choice of the graphon representing it;
we may thus define the entropy $\Ent(\gG)$ of a graph limit $\gG$ as the
entropy $\Ent(W)$ of any graphon representing it.

Our first theorem bounds the rate of growth of an arbitrary graph class in
terms of the entropy of the limiting graph limits (or graphons).  

\begin{theorem}
\label{thm:T2}
Let $\QQ$ be a class of graphs. Then
\begin{equation}
\label{eq:thm_T2}
\limsup_{n \to \infty} \frac{\logb |\QQ_n|}{\binom {n}2}  
\le \max_{\gG \in
\cqoo} \Ent(\gG).
\end{equation}
\end{theorem}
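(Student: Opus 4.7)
The plan is to cover the compact set $\cqoo$ by finitely many small balls in the cut metric, count the labelled graphs whose graphons fall in each ball, and sum. First observe that $|\ql_n| \le n!\,|\QQ_n|$, so that $\logb|\ql_n|/\binom{n}{2} - \logb|\QQ_n|/\binom{n}{2} = O(\log n / n) = o(1)$, and therefore it suffices to prove the bound with $|\QQ_n|$ replaced by $|\ql_n|$.

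The technical heart of the argument is a counting lemma: for every graphon $W \in \cW_0$, there exists a function $\eta(\epsilon) \to 0$ as $\epsilon \to 0$ such that, for all sufficiently large $n$,
\begin{equation*}
\bigl|\{G \in \LL_n : \gdcut(W_G, W) < \epsilon\}\bigr| \le 2^{\binom{n}{2}(\Ent(W) + \eta(\epsilon))}.
\end{equation*}
I would prove this by first approximating $W$ by a step function $W'$ with $k$ blocks of equal measure and density matrix $(d_{ij})$, chosen so that $\Ent(W') \le \Ent(W) + \epsilon$ and $\gdcut(W, W') < \epsilon$. Any graph $G \in \LL_n$ with $W_G$ close to $W$ admits a balanced partition $V_1, \ldots, V_k$ of $[n]$ with bipartite densities approximately $d_{ij}$; by Stirling's approximation, the number of graphs on $[n]$ respecting a fixed such partition is at most $\prod_{i \le j} \binom{|V_i||V_j|}{\approx d_{ij}|V_i||V_j|}$, whose log is $\binom{n}{2} k^{-2} \sum_{i,j} h(d_{ij}) (1+o(1)) = \binom{n}{2}(\Ent(W') + o(1))$, and the $k^n = 2^{o(\binom{n}{2})}$ choices of partition contribute negligibly.

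For the conclusion, $\cqq = \overline{\QQ}$ is closed, hence compact, in the compact metric space $\cuq$; so is $\cqoo = \cqq \cap \cuoo$, since the finite graphs are isolated points of $\cuq$. For each $\epsilon > 0$, choose a finite cover of $\cqoo$ by cut-metric balls $B(W_1,\epsilon), \ldots, B(W_N,\epsilon)$ with centres $W_i \in \cqoo$. For $n$ sufficiently large, every $G \in \ql_n$ satisfies $\gdcut(W_G, W_i) < 2\epsilon$ for some $i$ (otherwise, along a subsequence one could extract a graph limit in $\cqoo$ at cut-distance $\ge \epsilon$ from every $W_i$, contradicting the cover). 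Hence
\begin{equation*}
|\ql_n| \le N \cdot 2^{\binom{n}{2}(\sup_{\gG\in\cqoo}\Ent(\gG) + \eta(2\epsilon))}.
\end{equation*}
Taking logarithms, dividing by $\binom{n}{2}$, and letting first $n \to \infty$ and then $\epsilon \to 0$ yields \eqref{eq:thm_T2}; upper semicontinuity of $\Ent$ on the compact set $\cqoo$ guarantees that the supremum is attained as a maximum. The main obstacle is the counting lemma: making the step-function approximation of $W$ tight enough, and controlling the Stirling error uniformly as a function of $\epsilon$ alone, for all labelled graphs $G$ in the cut-metric neighbourhood of $W$, requires careful bookkeeping.
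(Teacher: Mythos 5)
Your overall strategy---replace $|\QQ_n|$ by $|\ql_n|$, show every large $G\in\ql_n$ is $\gdcut$-close to $\cqoo$ by compactness, cover $\cqoo$ by finitely many cut-metric balls, and count labelled graphs near each centre via a step-function approximation---is the same as the paper's, which routes everything through its main Lemma~\ref{lem:T1}. However, the final limiting step has a genuine gap. Your counting lemma asserts, correctly, that \emph{for each fixed} graphon $W$ there is $\eta_W(\epsilon)\to0$ with $\bigl|\{G\in\LL_n:\gdcut(W_G,W)<\epsilon\}\bigr|\le 2^{\binom n2(\Ent(W)+\eta_W(\epsilon))}$. But when you then write $|\ql_n|\le N\cdot 2^{\binom n2(\sup\Ent+\eta(2\epsilon))}$, you have silently replaced the $W$-dependent $\eta_{W_i}$ by a single $\eta$. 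For a fixed $\epsilon$ this is harmless: take $\eta=\max_{i\le N}\eta_{W_i}$, a max over finitely many centres. The problem is that as $\epsilon\to0$ the cover (hence the set of centres) changes, and nothing in the proposal shows that $\max_{i\le N(\epsilon)}\eta_{W_i}(2\epsilon)\to0$. This uniformity is not automatic, and the obstruction is fundamental: averaging \emph{increases} entropy, $\Ent(\E[W\mid\BB_\PP])\ge\Ent(W)$ (Jensen applied to the concave $h$), and while $\Ent(\E[W\mid\BB_{\cPP_k}])\downarrow\Ent(W)$ for each fixed $W$ as $k\to\infty$, the number of blocks $k$ needed to bring the step-function entropy within $\epsilon$ of $\Ent(W)$ depends on $W$ with no uniform bound, even over a compact family. (For instance, $\{0,1\}$-valued graphons with arbitrarily fine checkerboard structure have entropy $0$ but $k$-block averages of entropy close to $1$.) So the quantity you denote $\eta(2\epsilon)$ is not a single function of $\epsilon$, and its convergence to $0$ is exactly what needs proof.

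The paper closes precisely this gap by not insisting that the step approximation of each $W_i$ have entropy near $\Ent(W_i)$. Instead it bounds the count near $W_i$ by $\Ent\bigl(\E[W_i\mid\BB_{\PP_i}]\bigr)+\hx(8k^2\delta)$ (Lemmas~\ref{lem:boundHatN} and~\ref{lem:bound_N}), where the equipartition $\PP_i$ into $k$ parts comes from the weak regularity lemma \eqref{weakRegGraphon}, so that $\|W_i-\E[W_i\mid\BB_{\PP_i}]\|_\square\le 4/\sqrt{\logb k}$ \emph{uniformly over all graphons}, independently of how wild $W_i$ is. Setting $\delta=2^{-k}$ and letting $k\to\infty$, the maximising centres $W'_k$ and their step approximations $W''_k$ both converge (along a subsequence, by compactness) to a single $W'\in\cqoo$, and then the semicontinuity of entropy in \refL{lem:EntSemicont}(ii) gives $\limsup_k\Ent(W''_k)\le\Ent(W')\le\max_{\cqoo}\Ent$. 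That subsequential argument is the missing ingredient in your proposal; you correctly flag the uniformity issue in your last sentence but do not resolve it. One can also repair your argument without the weak-regularity detour, by observing that $f(W,\epsilon):=\limsup_n\binom n2^{-1}\logb N_\square(n,\epsilon;W)$ satisfies $f(W_1,\epsilon-\delta)\le f(W_2,\epsilon)\le f(W_1,\epsilon+\delta)$ whenever $\gdcut(W_1,W_2)\le\delta$, and then extracting a convergent subsequence of near-maximising centres; but some argument of this type must be supplied.
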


We present the proofs of this and the following theorems 
in Section~\ref{sec:proofT2}.

\begin{remark}\label{Rn!}
  For any graph class $\QQ$, and $n\ge1$,
  \begin{equation}\label{n!}
	|\QQ_n| \le|\ql_n|\le n!|\QQ_n|.
  \end{equation}
The factor $n!$ is for our purposes small and can be ignored, since $\logb
n!=o(n^2)$. Thus we may replace $|\QQ_n| $ by $|\ql_n|$ in \refT{thm:T2}.
The same holds for the theorems below.
\end{remark}

\begin{remark}\label{R1}
$|\cQ_n|\le|\cU_n|\le|\cL_n|=2^{\binom n2}$, so the \lhs{} of
  \eqref{eq:thm_T2}   
is at most 1, and it  equals 1 if $\cQ$ is the class of all
graphs, \cf{} \eqref{n!}.
Furthermore, by \eqref{entw}, $\Ent(W)\in\oi$ for every graphon $W$.
In the trivial case when $\cQ$ is a finite class, $\cQ_n=\emptyset$ for all
large $n$ and the \lhs{} is $-\infty$; in this case $\cqoo=\emptyset$ and
the \rhs{} is also (interpreted as) $-\infty$. 
We exclude in the sequel this trivial case;
thus both sides of   \eqref{eq:thm_T2} are in $\oi$.
Note further that $\Ent(W)=1$ only when $W=1/2$ \aex; thus the \rhs{} of 
  \eqref{eq:thm_T2} equals 1 if and only if $\cqoo$ contains the 
graph limit defined by the constant graphon $W=1/2$. (This graphon is
the limit of sequences of quasi-random graphs, see \cite{MR2274085}.)
\end{remark}

A graphon is called \emph{random-free}
if it is $\{0,1\}$-valued almost 
everywhere, see \cite{LovaszSzegedy:regular,SJ249}.
Note that a graphon 
$W$ is random-free if and only if $\Ent(W)=0$. 
This is preserved by equivalence of graphons, so we may define a graph limit to
be random-free if some (or any) representing graphon is random-free;
equivalently, if its entropy is 0.
A
property $\QQ$ is called \emph{random-free} if every $\gG \in \cqoo$ is
random-free. Theorem~\ref{thm:T2} has the following immediate
corollary:

\begin{corollary}
If $\QQ$ is  a random-free class of graphs, then $|\QQ_n|=2^{o(n^2)}$.
\end{corollary}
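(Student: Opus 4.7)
The plan is to read off the corollary directly from \refT{thm:T2} by observing that the random-free hypothesis forces the right-hand side of \eqref{eq:thm_T2} to be $\le 0$.

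More concretely, I would first dispose of the trivial case where $\QQ$ is finite: then $\QQ_n=\emptyset$ for all large $n$ and $|\QQ_n|=2^{o(n^2)}$ holds vacuously. Otherwise $\cqoo$ is non-empty, and I would invoke the definition of random-free together with the remark from the text that a graphon $W$ is random-free if and only if $\Ent(W)=0$. This equivalence itself is immediate from \eqref{entw}: since $h(0)=h(1)=0$ and $h\ge 0$, we have $\Ent(W)=0$ exactly when $h(W(x,y))=0$ a.e., that is, when $W\in\{0,1\}$ a.e.; and by \eqref{eq:graphon_uniquenss} this property is invariant under equivalence, so it descends to graph limits. Hence every $\gG\in\cqoo$ has $\Ent(\gG)=0$, so $\max_{\gG\in\cqoo}\Ent(\gG)=0$.

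Feeding this into \refT{thm:T2} gives
\begin{equation*}
\limsup_{n\to\infty}\frac{\logb|\QQ_n|}{\binom{n}{2}}\le 0,
\end{equation*}
which means that for every $\eps>0$, eventually $\logb|\QQ_n|\le \eps\binom{n}{2}\le \eps n^2/2$. Equivalently $\logb|\QQ_n|=o(n^2)$, i.e., $|\QQ_n|=2^{o(n^2)}$, as required.

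There is really no obstacle here: the corollary is a one-line consequence once one recognises that random-freeness is exactly the condition $\Ent=0$. The only minor subtlety is the trivial edge case $\cqoo=\emptyset$, but as noted in \refR{R1} this only occurs when $\cQ$ itself is finite, and the conclusion is then immediate.
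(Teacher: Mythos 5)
Your proof is correct and matches exactly the intended argument: the paper presents this as an immediate corollary of \refT{thm:T2}, relying precisely on the observation that random-freeness forces $\Ent(\gG)=0$ for every $\gG\in\cqoo$, so the right-hand side of \eqref{eq:thm_T2} vanishes. Your handling of the finite/trivial case is a sensible extra detail consistent with \refR{R1}.
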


For further results on random-free graphons and random-free classes of graphs, see
Hatami and Norine \cite{HatamiNorine12}.

A graph class $\mathcal{P}$ is \emph{hereditary} if whenever a graph $G$
belongs to $\QQ$, then every induced subgraph of $G$  
also belongs to $\mathcal{P}$. 

Our second theorem says that when $\QQ$ is a hereditary graph property, equality
holds in (\ref{eq:thm_T2}).
(See also \refT{thm:TR} below.) 

\begin{theorem}\label{thm:T4}
Let $\QQ$ be a hereditary class of graphs. Then
$$
\lim_{n \to \infty} \frac{\logb |\QQ_n|}{\binom{n}{2}}  
= \max_{\gG \in\cqoo} \Ent(\gG).
$$
\end{theorem}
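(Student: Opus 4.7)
The upper bound $\limsup_n \logb|\QQ_n|/\binom n 2 \le \max_{\gG\in\cqoo}\Ent(\gG)$ is already provided by \refT{thm:T2}, so the heart of the argument is the matching lower bound. My plan is to pick an arbitrary $\gG\in\cqoo$ and use the random graph $G(n,\gG)$ to produce many distinct labelled graphs in $\ql_n$, then conclude by taking the supremum over $\gG$.

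The first move is to exploit hereditariness to ensure that $G(n,\gG)$ almost surely lies in $\ql_n$. Indeed, $\gG=\lim_{k\to\infty} G_k$ for some $G_k\in\QQ$. If $H\in\cU\setminus\QQ$ is any forbidden graph, then for every $k$ with $|G_k|\ge|H|$ the graph $H$ cannot appear as an induced subgraph of $G_k$ (otherwise $H\in\QQ$ by hereditariness), so $p(H;G_k)=0$; passing to the limit, $p(H;\gG)=0$. In view of \eqref{gnw}, $G(n,\gG)$ is thus supported on $\ql_n$, and therefore its Shannon entropy satisfies $\HH(G(n,\gG))\le\logb|\ql_n|$.

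The second move is the usual lower bound on $\HH(G(n,\gG))$ in terms of $\Ent(\gG)$. Let $W$ be a graphon representing $\gG$. Conditionally on the latent variables $X_1,\dots,X_n$, the edges of $G(n,W)$ are independent Bernoullis with parameters $W(X_i,X_j)$, so the conditional entropy is $\sum_{i<j}h(W(X_i,X_j))$, with expectation $\binom{n}{2}\Ent(W)=\binom{n}{2}\Ent(\gG)$. Since $-x\logb x$ is concave, Jensen's inequality applied termwise to $\Pr[G(n,\gG)=H]=\E[\,p(H;W\mid X)\,]$ yields
\[
\HH(G(n,\gG))\ge \binom{n}{2}\Ent(\gG).
\]
Combining this with the support bound above and the inequality $|\ql_n|\le n!\,|\QQ_n|$ from \refR{Rn!} gives
\[
\logb|\QQ_n|\ge \binom{n}{2}\Ent(\gG)-\logb n!,
\]
so after dividing by $\binom{n}{2}$ and letting $n\to\infty$ we obtain $\liminf_n \logb|\QQ_n|/\binom n 2 \ge \Ent(\gG)$. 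Taking the supremum over $\gG\in\cqoo$ and combining with \refT{thm:T2} gives the desired equality, the maximum being attained because $\cqoo$ is a closed subset of the compact space $\cuq$ and $\Ent$ is upper semi-continuous on $\cuq$ (indeed, $\Ent$ is an average of a concave function, so stepping approximations only increase it, and this can be used to show upper semi-continuity with respect to graph limit convergence).

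The main obstacle is the hereditariness step: translating the combinatorial hypothesis on $\QQ$ into the pointwise statement $p(H;\gG)=0$ for every finite $H\notin\QQ$, which is what forces $G(n,\gG)\in\ql_n$ almost surely and thereby lets its entropy lower-bound $\logb|\ql_n|$. Everything afterwards is standard entropy bookkeeping and does not depend on any further property of $\QQ$.
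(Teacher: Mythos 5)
Your proof is correct and follows essentially the same strategy as the paper: show $G(n,W)$ is supported on $\ql_n$ for any $W$ representing a graph limit in $\cqoo$ (the paper cites Lemma~\ref{lem:hered}, you re-derive the needed half inline), bound $\HH(G(n,W))$ by $\logb|\ql_n|$, and combine with Theorem~\ref{thm:T2} and Remark~\ref{Rn!}. The one small difference is that the paper invokes the asymptotic equality $\HH(G(n,W))/\binom{n}{2}\to\Ent(W)$ from \cite{Aldous} and \cite{SJ249}, whereas you derive the one-sided inequality $\HH(G(n,W))\ge\binom{n}{2}\Ent(W)$ directly via conditioning on the latent variables, which is slightly more self-contained and is all that the lower bound requires.
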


Our next theorem concerns the limit of the sequences of random graphs that  are
sampled from a graph class. There are two natural ways to sample a random 
graph sequence $(G_n)$, with $\vv{G_n}=n$,
from a graph class $\QQ$. 
The first  is to pick an unlabelled graph $G_n$ uniformly at random from
$\QQ_n$, for each $n\ge1$ (assuming that $\QQ_n\neq\emptyset$).  
The second is to pick a labelled graph $G_n$ uniformly at random from
$\ql_n$.
We call the resulting random graph
$G_n$ a \emph{uniformly random unlabelled element of $\QQ_n$}
and a \emph{uniformly random labelled element of $\QQ_n$}, respectively.

\begin{theorem}
\label{thm:T3}
Suppose that $\max_{\gG \in \cqoo} \Ent(\gG)$ is attained by a unique graph
limit $\gG_\QQ$.  Suppose further that equality holds in \eqref{eq:thm_T2}, 
\ie
\begin{equation}
\label{eq:thm_T3}
\lim_{n \to \infty} \frac{\logb |\QQ_n|}{\binom{n}{2}} =  \Ent(\gG_\QQ).
\end{equation}
Then
\begin{romenumerate}
 \item 
 If\/ $G_n \in \mathcal{U}_n$ is a uniformly random unlabelled
element of $\QQ_n$,
then $G_n$ converges to $\gG_\QQ$ in probability as \ntoo.
 \item 
The same holds
if\/ $G_n \in \LL_n$ is a uniformly random labelled element of $\ql_n$.
\end{romenumerate}
\end{theorem}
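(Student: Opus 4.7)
The plan is to translate the desired convergence in probability into an exponential counting comparison. Since $\cuq$ is a compact metric space, $G_n \pto \gG_\QQ$ amounts to saying that for every open neighborhood $U$ of $\gG_\QQ$ in $\cuq$, one has $\Pr[G_n \notin U] \to 0$. Write $F := \cuq \setminus U$, a closed set not containing $\gG_\QQ$. For part (i), identifying each finite graph with its image in $\cuq$,
\begin{equation*}
\Pr[G_n \in F] = \frac{|\cR_n|}{|\QQ_n|}, \qquad \text{where } \cR_n := \QQ_n \cap F.
\end{equation*}
So the goal is to show this ratio is $2^{-\Omega(n^2)}$.

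To estimate $|\cR_n|$, consider the auxiliary graph class $\cR := \bigcup_n \cR_n \subseteq \cU$, and write $\widehat\cR \subseteq \cuoo$ for its set of limit points. Since $F$ is closed in $\cuq$, any limit point of a sequence in $\cR$ lies in $F$, so $\widehat\cR \subseteq F \cap \cqoo$; in particular $\gG_\QQ \notin \widehat\cR$. Applying \refT{thm:T2} to $\cR$ gives
\begin{equation*}
\limsup_{n\to\infty} \frac{\logb |\cR_n|}{\binom{n}{2}} \le \max_{\gG \in \widehat\cR} \Ent(\gG).
\end{equation*}
The key claim is that this right-hand side is strictly less than $\Ent(\gG_\QQ)$. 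Upper semicontinuity of $\Ent$ on the compact space $\cuq$ (a standard fact about graphon entropy under the cut-distance topology) combined with compactness of $\widehat\cR$ ensures that the maximum is attained at some $\gG^* \in \widehat\cR \subseteq F$. Since $\gG_\QQ \notin F$, we have $\gG^* \ne \gG_\QQ$, and the uniqueness hypothesis forces $\Ent(\gG^*) < \Ent(\gG_\QQ)$. Therefore, for some $\delta > 0$ and all sufficiently large $n$, $|\cR_n| \le 2^{\binom{n}{2}(\Ent(\gG_\QQ) - 2\delta)}$, while \eqref{eq:thm_T3} yields $|\QQ_n| \ge 2^{\binom{n}{2}(\Ent(\gG_\QQ) - \delta)}$, so $|\cR_n|/|\QQ_n| \le 2^{-\delta\binom{n}{2}} \to 0$, proving (i).

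For (ii), pass via the label-forgetting map $\cL_n \to \cU_n$. Since the preimage of an unlabelled graph on $n$ vertices has size $n!/|\mathrm{Aut}(G)| \le n!$, the number of $H \in \ql_n$ whose underlying unlabelled graph lies in $\cR_n$ is at most $n!\,|\cR_n|$. Using $|\ql_n| \ge |\QQ_n|$ from \eqref{n!}, the probability that a uniformly random $G_n \in \ql_n$ projects into $F$ is bounded by
\begin{equation*}
\frac{n!\,|\cR_n|}{|\QQ_n|} \le n!\cdot 2^{-\delta\binom{n}{2}} = 2^{o(n^2) - \delta\binom{n}{2}} \to 0.
\end{equation*}

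The principal subtlety is the upper semicontinuity of $\Ent$ on $\cuq$, equivalently, that $\Ent$ attains its maximum over any closed subset of $\cuq$; this is what upgrades the uniqueness hypothesis into a quantitative gap $s < \Ent(\gG_\QQ)$. Once this is in place, the argument is essentially an application of \refT{thm:T2} to the restricted class $\cR$: the theorem supplies the exponential upper bound on $|\cR_n|$, uniqueness upgrades it to a strict gap, and \eqref{eq:thm_T3} converts the gap into $2^{-\Omega(n^2)}$ decay.
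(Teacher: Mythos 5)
Your proof is correct and takes essentially the same approach as the paper's. Both proofs reduce to applying Theorem~\ref{thm:T2} to the restricted graph class $\QQ^* = \QQ \setminus B$ (in the paper, $B$ is a cut-distance ball around $\gG_\QQ$; you use the complement of an arbitrary open neighbourhood, which is equivalent since $\cuq$ is a compact metric space), observe that the limit points of that class avoid $\gG_\QQ$, and invoke the uniqueness hypothesis to produce a strict entropy gap. The only difference in presentation is that the paper argues by contradiction (assume the ratio does not tend to $1$, deduce $\limsup \logb|\QQ^*_n|/\binom{n}{2} = \Ent(\gG_\QQ)$, contradict Theorem~\ref{thm:T2} plus uniqueness), whereas you argue directly and extract the exponential decay rate $2^{-\delta\binom{n}{2}}$ explicitly; the attainment of the max over the closed set $\widehat\cR$, which you justify via upper semicontinuity of $\Ent$, is already built into the statement of Theorem~\ref{thm:T2} (via Lemma~\ref{lem:EntSemicont}), so the content is the same. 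Your treatment of the labelled case (ii) via the label-forgetting map and \eqref{n!} matches the paper's one-line reference to the same bound. One cosmetic point: the paper's Lemma~\ref{lem:EntSemicont}(ii) is labelled ``lower semicontinuous'' but the inequality stated ($\limsup_m \Ent(W_m) \le \Ent(W)$) is upper semicontinuity, which is what you correctly use; you should be aware of this mismatch when citing it.
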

\begin{remark}
Note that for hereditary properties, it  suffices to only
assume that $\max_{\gG \in \cqoo} \Ent(\gG)$ is attained by a unique graph
limit $\gG_\QQ$ as then \eqref{eq:thm_T3} follows from Theorem~\ref{thm:T4}.
\end{remark}

The next theorem concerns sequences of random graphs drawn from arbitrary
distributions, not necessarily uniform.
A random labelled [unlabelled] graph $G_n$
on $n$ vertices is thus any random variable with values in $\cU_n$
[$\LL_n$].
We consider convergence in distribution of $G_n$, regarding $G_n$ as a
random element of $\cU\subset\cuq$ (ignoring labels if there are any); the
limit in distribution (if it exists) is thus a random element of $\cuq$,
which easily is seen to be concentrated on $\cuoo$; in other words, the
limit is a random graph limit.

Recall that the entropy $\HH(X)$ of a random variable $X$ taking values in some
finite (or countable) set $A$ 
is $\sum_{a\in A} (-p_a\logb p_a)$, where 
$p_a\=\P(X=a)$.

%Theorem TC
\begin{theorem}
\label{thm:TC}
Suppose that $G_n$ is a 
(labelled or unlabelled)
random graph on $n$ vertices with some distribution
$\mu_n$. Suppose further that as $n \to \infty$, $G_n$ converges in distribution
to some random graph limit with distribution $\mu$. Then
$$ \limsup_{n \to \infty}\frac{\HH(G_n)} {\binom{n}{2}}
\le \max_{W \in \supp(\mu)}\Ent(W),
$$
where $\supp(\mu)\subseteq\cuoo$ is the support of the probability measure
$\mu$.
\end{theorem}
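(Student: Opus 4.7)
The plan is to combine \refT{thm:T2} with a finite covering argument and the standard subadditivity of Shannon entropy. Let $M\=\max_{W\in\supp(\mu)}\Ent(W)$ and fix $\eps>0$. The first step uses upper semi-continuity of $\Ent$ on $\cuq$: for each $\gG\in\supp(\mu)$ I would pick a closed metric ball $\overline V_\gG$ around $\gG$ with $\Ent(\gG')\le M+\eps$ for every $\gG'\in\overline V_\gG\cap\cuoo$. (Such a ball exists, for otherwise a diagonal argument would produce $\gG^{(m)}\to\gG$ with $\Ent(\gG^{(m)})>M+\eps$, contradicting upper semi-continuity together with $\Ent(\gG)\le M$.) Compactness of $\supp(\mu)$ then supplies finitely many such balls $\overline V_{\gG_1},\dots,\overline V_{\gG_k}$ whose open interiors already cover $\supp(\mu)$; set $U\=\bigcup_iV_{\gG_i}$.

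For each $i\in[k]$ introduce the graph class $\QQ^{(i)}\=\{G\in\cU:G\in\overline V_{\gG_i}\}$. Closedness of $\overline V_{\gG_i}$ in $\cuq$ forces $\cqoo^{(i)}\subseteq\overline V_{\gG_i}\cap\cuoo$, so every graph limit in $\cqoo^{(i)}$ has entropy at most $M+\eps$. Therefore \refT{thm:T2} gives $|\QQ^{(i)}_n|\le 2^{(M+2\eps)\binom n2}$ for all large $n$, uniformly in $i$; by \refR{Rn!} the same bound (up to $o(n^2)$ in the exponent) holds for the labelled counts.

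I would then decompose $\HH(G_n)$ according to which $\QQ^{(i)}$ contains $G_n$. Define $I_n\=\min\{i\ge1:G_n\in\QQ^{(i)}_n\}$, with $I_n\=0$ if no such $i$ exists. The chain rule for Shannon entropy, together with the trivial bound $\HH(G_n\mid I_n=i)\le\logb|\QQ^{(i)}_n|$ for $i\ge 1$ (or its labelled analogue) and $\HH(G_n\mid I_n=0)\le\binom n2+o(n^2)$, yields
\begin{equation*}
\HH(G_n)\le \logb(k+1)+\P(I_n=0)\binom n2+(M+2\eps)\binom n2+o(n^2).
\end{equation*}
Since $\cuq\setminus U$ is closed and disjoint from $\supp(\mu)$, the portmanteau theorem applied to the convergence $G_n\to\mu$ gives $\P(I_n=0)\le\P(G_n\notin U)\to 0$. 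Dividing by $\binom n2$ and then letting first $n\to\infty$ and then $\eps\downarrow 0$ delivers the desired bound.

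The main obstacle will be establishing (or citing) the upper semi-continuity of $\Ent$ on $\cuq$. This should follow from the entropy formula $\Ent(W)=\lim_k\HH(G(k,W))/\binom k2$, which would present $\Ent$ as a monotone limit of the continuous functions $W\mapsto \HH(G(k,W))/\binom k2$; monotonicity in $k$ and continuity of each term in the $\cuq$ topology are the facts one would need to verify (or import from \cite{SJ249}). Once that is in hand, the covering argument and the $I_n$-decomposition reduce the theorem to a mechanical application of \refT{thm:T2} and standard weak-convergence tools.
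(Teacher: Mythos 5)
Your argument is correct, but it takes a structurally different route from the paper. The paper applies Lemma~\ref{lem:T1} \emph{directly} to $A=\supp(\mu)$: the identity
$\lim_{\gd\to0}\limsup_n\binom n2^{-1}\logb N_\square(n,\gd;\supp(\mu))=\max_{W\in\supp(\mu)}\Ent(W)$
does all the combinatorial work in a single stroke, and the entropy decomposition is with respect to one binary indicator $\etta_{[G_n\in\UB]}$ with $\UB$ the $\gd$-neighbourhood of $\supp(\mu)$. You instead build a finite cover of $\supp(\mu)$ by closed balls on which $\Ent$ is controlled, apply \refT{thm:T2} to the graph class carved out by each ball, and decompose $\HH(G_n)$ by a $(k{+}1)$-valued indicator. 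This works, and it has the pedagogical merit of exhibiting \refT{thm:TC} as a corollary of \refT{thm:T2} plus portmanteau plus the chain rule. But it is in effect a re-derivation of part of what \refL{lem:T1} already packages: the finite cover, the semicontinuity of $\Ent$, and the per-ball counting estimates are precisely the ingredients used inside \refL{lem:T1}'s proof, and \refT{thm:T2} is itself proved by invoking \refL{lem:T1}. So both routes stand on the same foundations; the paper's is simply shorter because it uses the purpose-built lemma as a black box.

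On your flagged ``main obstacle'': the semicontinuity you need is exactly \refL{lem:EntSemicont}(ii). (Note the paper labels it ``lower semicontinuous'', but the displayed inequality $\limsup_m\Ent(W_m)\le\Ent(W)$ is what you are, correctly, calling upper semicontinuity.) The paper proves it by conditioning $W$ on finite equipartitions and using \refL{lem:EntSemicont}(i) plus dominated convergence---not via the formula $\Ent(W)=\lim_k\HH(G(k,W))/\binom k2$. Your suggested alternative is the shaky part of the sketch: I do not see why $k\mapsto\HH(G(k,W))/\binom k2$ should be monotone decreasing (a quick attempt via $\HH(G(k,W))\le\HH(G(k-1,W))+(k-1)$ does \emph{not} yield it, since it would require $\HH(G(k-1,W))\ge\binom{k-1}{2}$), and without monotonicity a pointwise limit of continuous functions need not be semicontinuous. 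You should cite the partition-averaging argument of \refL{lem:EntSemicont}(ii) rather than try to push the monotone-limit idea.
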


\subsection{Maximal entropy graphons}\label{sec:mainII}
The results in Section~\ref{sec:mainI} show that graphons with maximal entropy
capture the growth rate and other asymptotic behaviors of graph classes. In
this section we study the structure 
of those graphons for hereditary classes. 

We define the 
%\emph{support} of a graphon $W$ as 
%$\bigset{(x,y) \in [0,1]^2 : W(x,y)>0}$, its
\emph{randomness support} of a graphon $W$ as 
\begin{equation}
  \label{rand}
\rand(W) := \left\{(x,y) \in [0,1]^2 : 0<W(x,y)<1 \right\}, 
\end{equation}
and its  \emph{random part} as the restriction of $W$ to $\rand(W)$. Finally
the  
\emph{randomness support graphon} of $W$ is defined as $\etta_{\rand(W)}$, the
indicator 
of its randomness support.

A graphon $W$ is called   $K_{r}$-free (where $r\ge1$)
if $p(K_{r},W)=0$; by \eqref{phw},
this is equivalent to
$\prod_{1 \le i < j \le r} W(x_i,x_j)=0$ for almost every $x_1,\ldots,x_r$.
(The case $r=1$ is trivial: no graphon is $K_1$-free.)
Recall that the Tur\'an graph $T_{n,r}$ is the balanced complete $r$-partite
graph with $n$ vertices. For each $r \ge 1$, the graphs $T_{n,r}$ converge to
the  $K_{r+1}$-free graphon $W_{K_r}$ as \ntoo.  

Let $E_{r}$ denote the support of $W_{K_r}$, 
\ie, $E_r\=\bigcup_{i\neq j}I_i\times I_j$ where $I_i\=((i-1)/r,i/r]$ for
  $i=1,\dots, r$,
and also define $E_{\infty} :=
[0,1]^2$.
For $1 \le r \le \infty$, let $R_r$ be the set of graphons $W$ such that
$W(x,y)=\frac{1}{2}$ on $E_{r}$ and $W(x,y)\in \{0,1\}$ otherwise. In other
words, $W$ has randomness support $E_{r}$ and its random part is $\frac{1}{2}$
everywhere. 
Note that $E_1=\emptyset$ and thus $R_1$ is the set of random-free graphons,
while $R_\infty$ consists only of the constant graphon $\frac{1}{2}$.
 If $W \in R_r$, then
\begin{equation}
\label{eq:TuranGraphons}
\Ent(W)= \int_{E_{r}} h(1/2)=|E_{r}| = 1 -\frac{1}{r}.
\end{equation}

A simple example of a graphon in $R_r$ is $\frac{1}{2} W_{K_r}$. 
(For $r<\infty$, this is the almost surely 
limit of a uniformly random subgraph of $T_{n,r}$ as $n \to \infty$.) More
generally, if $r < \infty$, we can modify $\frac{1}{2} W_{K_r}$ by changing
it on 
each square $I_i^2$ for $i=1,\ldots,r$ to a
symmetric measurable $\{0,1\}$-valued
function (\ie{} to any random-free graphon, scaled in the natural way);
this gives all graphons in $R_r$.

We let, for $1\le r<\infty$ and $0\le s\le r$, $\wwrs$ be the graphon in 
$R_r$ that is $1$ on $I_i\times I_i$ for $i\le s$ and 
$0$ on $I_i\times I_i$ for $i> s$. (Thus $\wwx{r,0}=\frac12W_{K_r}$.)

For a class $\QQ$ of graphs, let
$$\cqoo^* := \left\{\W \in \cqoo: \Ent(\W) = \max_{\W \in
\cqoo} \Ent(\W) \right\} $$
denote the set of graph limits in $\cqoo$ with maximum entropy. 
It follows from \refL{lem:EntSemicont} below that 
the maximum is attained and that 
$\cqoo^*$ is a 
non-empty
closed
subset of $\cqoo$, and thus a non-empty compact set.

After these preparations, we state the following result, improving 
\refT{thm:T4}. 
\begin{theorem}
\label{thm:TR}
Let $\QQ$ be a hereditary class of graphs. Then there exists a number 
$r \in\{1,2\ldots,\infty\}$ such that 
$\max_{\W \in \cqoo}\Ent(\W)=1-\frac{1}{r}$, 
every graph limit in $\cqoo^*$ can be
represented by a graphon $W \in R_r$, 
and
\begin{equation}
  \label{eq:thm_TR}
|\QQ_n|=2^{(1-r^{-1}+o(1))\binom{n}{2} }.
\end{equation}
Hence, $\cqoo^*=\cqoo\cap R_r$. 
Moreover, $r$ has the further characterisations
\begin{align}
r
&=\min \lrset{s\ge1: \mbox{$\etta_{\rand(W)}$ is
 $K_{s+1}$-free for all graphons $W \in \cqoo$}}
\label{eq:define_r}
\\
&=\sup\lrset{t:\wwx{t,u}\in\cqoo \text{ for some $u\le t$}},
\label{eq:r2}
\end{align}
where the minimum in \eqref{eq:define_r} is
interpreted as $\infty$ when there is no such $s$.
Furthermore $r=1$ if and only if $\QQ$ is random-free, and $r=\infty$ if and
only if $\QQ$ is the class of all graphs.
\end{theorem}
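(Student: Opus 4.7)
The plan is to take \eqref{eq:define_r} as the definition of $r$ and deduce the rest. For the upper bound on $\max_{W \in \cqoo} \Ent(W)$: for each $W \in \cqoo$, the indicator $\etta_{\rand(W)}$ is by definition a $\{0,1\}$-valued $K_{r+1}$-free graphon, and a graphon version of Tur\'an's theorem---sample $G(n, \etta_{\rand(W)})$, which is a.s.\ $K_{r+1}$-free, apply Tur\'an's bound to each realization, and take expectations as $n\to\infty$---shows $|\rand(W)| \le 1 - 1/r$. Combined with $h \le 1$ and $h(W) = 0$ off $\rand(W)$, this gives $\Ent(W) \le |\rand(W)| \le 1 - 1/r$.

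For the matching lower bound, the key step is to exhibit $\wwx{r, u} \in \cqoo$ for some $u \in \{0, 1, \ldots, r\}$. The case $r = 1$ is trivial ($\wwx{1, 0}$ is the empty graphon). Assume $2 \le r < \infty$. By minimality of $r$ there is $W_0 \in \cqoo$ with $p(K_r, \etta_{\rand(W_0)}) > 0$, so there exist distinct points $x_1, \ldots, x_r$ with $W_0(x_i, x_j) \in (0, 1)$ for $i \ne j$. A Lebesgue-density argument delivers disjoint measurable sets $J_1, \ldots, J_r$ of equal positive measure on which $W_0$ takes values in $(\delta, 1 - \delta)$ on $J_i \times J_j$ for $i \ne j$ and is essentially constant equal to $q_i \in [0, 1]$ on each $J_i^2$. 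Using the standard sampling characterization for hereditary classes---$W \in \cqoo$ iff $G(n, W) \in \QQ$ a.s.\ for every $n$---the rescaled restriction $W^*$ of $W_0$ to $\bigcup J_i$ lies in $\cqoo$. Because the off-diagonal values of $W^*$ stay in $(\delta, 1 - \delta)$, every cross-edge pattern appears in $\supp G(n, W^*)$; and after relabelling the parts so that the $q_i = 1$ blocks precede the $q_i \in (0, 1)$ blocks precede the $q_i = 0$ blocks, there is a choice of $u$ such that the clique/empty diagonal pattern of $\wwx{r, u}$ is contained in the diagonal support of $W^*$. Therefore $\supp G(n, \wwx{r, u}) \subseteq \supp G(n, W^*) \subseteq \QQ \cap \LL_n$, so $\wwx{r, u} \in \cqoo$ by the sampling characterization. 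For $r = \infty$, the same construction gives $\wwx{s, u_s} \in \cqoo$ for every finite $s$; since $\wwx{s, u_s} \to \frac{1}{2}$ in $\cuq$ as $s \to \infty$, compactness places the constant graphon $\frac{1}{2}$ in $\cqoo$, so $\max \Ent = 1$.

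With $\max \Ent = 1 - 1/r$ established, \eqref{eq:thm_TR} follows from Theorem~\ref{thm:T4}. For any $W \in \cqoo^*$, equality throughout the upper-bound chain forces $h(W) = 1$ a.e.\ on $\rand(W)$ (so $W = \frac{1}{2}$ there) and $|\rand(W)| = 1 - 1/r$; by the uniqueness of Tur\'an extremals for $\{0, 1\}$-graphons---any $K_{r+1}$-free such graphon of density $1 - 1/r$ is equivalent to $W_{K_r}$---we deduce that $\etta_{\rand(W)}$ is equivalent to $W_{K_r}$, so $W$ has a representative in $R_r$; this shows $\cqoo^* \subseteq R_r$, and combined with the construction above yields $\cqoo^* = \cqoo \cap R_r$. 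The equivalence of \eqref{eq:define_r} and \eqref{eq:r2} follows since $\etta_{\rand(\wwx{t, u})} = W_{K_t}$ is not $K_t$-free, so any $\wwx{t, u} \in \cqoo$ forces $r \ge t$, while the construction exhibits $\wwx{r, u} \in \cqoo$. Finally, $r = 1$ iff every $W \in \cqoo$ is random-free iff $\QQ$ is random-free; and $r = \infty$ iff the constant $\frac{1}{2}$ graphon lies in $\cqoo$ iff $\QQ = \cU$, since $\supp G(n, \frac{1}{2}) = \LL_n$. The chief technical hurdle is the middle paragraph: the cross-edge values of $W_0$ need not be $\frac{1}{2}$ and its diagonal values can lie in $\{0, 1\}$; both issues are resolved by the support-inclusion principle together with the part-reordering argument.
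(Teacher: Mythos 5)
Your overall architecture (take \eqref{eq:define_r} as the definition, upper-bound $\Ent$ via Tur\'an for graphons, exhibit $\wwrs\in\cqoo$ by localising a non-trivial $K_r$ in the randomness support, then deduce \eqref{eq:r2}, the endpoint cases, and \eqref{eq:thm_TR} via Theorem~\ref{thm:T4}) is essentially the paper's. The upper bound and the extremal-characterization step, the limit $s\to\infty$ argument for $r=\infty$, and the two ``iff'' clauses all match the paper's treatment (the paper packages the Tur\'an steps as Theorem~\ref{thm:ErdosSimonovits} and Lemma~\ref{lem:entropy_cliquefree}, and the existence step as Lemma~\ref{lem:withclique}).

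There is, however, a genuine gap in your middle paragraph. You assert that a Lebesgue-density argument produces disjoint sets $J_1,\dots,J_r$ on which $W_0$ is ``essentially constant equal to $q_i\in[0,1]$'' on each $J_i\times J_i$, and the subsequent part-reordering by the value of $q_i$ depends on this. But the Lebesgue density theorem gives control at almost every point of $[0,1]^2$, and the points $(x_i,x_i)$ you would need it at lie on the diagonal, a null set; choosing $x_i$ uniformly at random does \emph{not} make $(x_i,x_i)$ a Lebesgue point of $W_0$. So there is no justification for the claim that the rescaled restriction of $W_0$ to $J_i\times J_i$ is (approximately) a constant graphon: it can be an essentially arbitrary graphon $U_i$. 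Without constancy, your sorting ``$q_i=1$ precede $q_i\in(0,1)$ precede $q_i=0$'' has no meaning, and the support inclusion $\supp G(n,\wwrs)\subseteq\supp G(n,W^*)$ is not established on the diagonal blocks.

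The paper's Lemma~\ref{lem:withclique} handles exactly this: after rescaling, it takes a subsequence so the diagonal block restrictions converge to some (arbitrary) graphons $U_i$ and the off-diagonal blocks become exactly $\tfrac12$, and then invokes \emph{Ramsey's theorem} to conclude that for every graphon $U_i$, either $p(K_n;U_i)>0$ for all $n$ or $p(\overline{K_n};U_i)>0$ for all $n$. This is what permits replacing each diagonal block by the constant $0$ or $1$ while staying in $\cqoo$ (via Lemma~\ref{lem:hered}), and it is the substitute for your unjustified constancy claim. If you insert that Ramsey step in place of ``essentially constant equal to $q_i$,'' your argument closes; as written, it does not.
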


The result \eqref{eq:thm_TR} is a fundamental result for hereditary classes 
of graphs, proved by Alekseev \cite{Alekseev} and Bollob\'as and Thomason
\cite{MR1425205}, see also the survey \cite{MR2252788}
and \eg{} 
\cite{MR1769217,MR1822715,
MR2047528,
MR2504400,
MR2467814,
Alon.et.al-hereditary}.
The number $r$ is known as the \emph{colouring number} of  $\cQ$.

\begin{remark} \label{Rcrs}
 Let, for $1\le r<\infty$ and $0\le s\le r$, 
$\cC(r,s)$ be the hereditary class of all graphs such that the
  vertex set can be partitioned into $r$ (possibly empty) sets $V_i$ with
  the subgraph induced by $V_i$ complete for $1\le i\le s$ and empty for
  $s<i\le r$. Note that $G(n,\wwrs)\in\cC(r,s)$ a.s., and that every graph
  in $\cC(r,s)$ with $n$ vertices appears with positive probability.
(In fact, $G\in\cC(r,s)\iff p(G,\wwrs)>0$.) 
It follows from \eqref{gnw} and \refL{lem:hered} below that, for any
hereditary class $\cQ$, $\wwrs\in\cqoo$ if and only if
$\cC(r,s)\subseteq\cQ$.
Hence, \eqref{eq:r2} shows that $r$ (when finite) is the largest integer
such that $\cC(r,s)\subseteq\cQ$ for some $s$; this is the traditional
definition of the colouring number, see \eg{} \cite{MR2252788} where further
comments are given.
\end{remark}

\section{Examples}

We give a few examples to illustrate the results. We begin with a simple case.

\begin{example}[Bipartite graphs] \label{Ebipartite}
  Let $\cQ$ be the class of \emph{bipartite graphs}; note that this equals
  the class $\cC(2,0)$ in \refR{Rcrs}.
Suppose that a graph limit $\gG\in\cqoo$. Then there exists a sequence of
graphs $G_n\to\gG$ with $G_n\in\cQ$, 
where for simplicity we may assume $|G_n|=n$. 
Since $G_n$ is bipartite, it has a bipartition that can be assumed to be
\set{1,\dots,m_n} and \set{m_n+1,\dots,n}. By selecting a subsequence, we
may assume that $m_n/n\to a$ for some $a\in\oi$, and it is then easy to see
(for example by using the bipartite limit theory in \cite[Section 8]{MR2573956})
that $\gG$ can be represented by a graphon that vanishes on
$[0,a]^2\cup[a,1]^2$. Conversely, if $W$ is such a graphon, then the random
graph $G(n;W)$ is bipartite, and thus $W\in\cqoo$. Hence $\cqoo$ equals the
set of graph limits represented (non-uniquely) by the graphons 
\begin{equation}\label{bip}
 \bigcup_{a\in\oi} \bigset{W: W=0 \text{ on } [0,a]^2\cup[a,1]^2}.
\end{equation}
If $W$ is a graphon in the set \eqref{bip}, with a given $a$, then the
support of $W$ has measure at most $2a(1-a)$, and thus 
\begin{equation}
 \Ent(W)\le 2a(1-a),
\end{equation}
with equality if and only if $W=\frac12$ on 
$(0,a)\times(a,1)\cup (a,1)\times(0,a)$. The maximum entropy is obtained for
$a=1/2$, and thus
\begin{equation}
  \max_{\gG\in\cqoo}\Ent(\gG) =\tfrac12,
\end{equation}
and the maximum is attained by a unique graph limit, represented by the
graphon $\wwx{2,0}$ defined above. 

\refT{thm:T4} thus says that $|\cQ_n|=2^{\frac12\binom n2+o(n^2)}$
(which can be easily proved directly). 
\refT{thm:T3} says that if $G_n$ is a
uniformly random (labelled or unlabelled) bipartite graph, then $G_n\to
\wwx{2,0}$ in probability. 
The colouring number $r$ in \refT{thm:TR} equals 2, and
both \eqref{eq:define_r} and \eqref{eq:r2} are easily verified directly.
\end{example}

\begin{example}[Triangle-free graphs]
  Let $\cQ$ be the class of \emph{triangle-free} graphs.
It is easy to see that the corresponding class of graph limits $\cqoo$ is
the class of triangle-free graph limits $\set{\gG:p(K_3,\gG)=0}$ defined in
  \refS{sec:mainII}, see \cite[Example 4.3]{SJ255}. 

This class is strictly larger than the class of bipartite graphs; 
the set $\cqoo$ of triangle-free graph limits 
thus contains the set \eqref{bip} of bipartite graph limits, and it is
easily seen that it 
is strictly larger. (An example of a triangle-free graph limit that is not
bipartite is $W_{C_5}$.) 

We do not know any representation of all
triangle-free graph limits similar to \eqref{bip}, but it is easy to find
the ones of maximum entropy.
If a graphon $W$ is
  triangle-free, then so is its randomness support graphon, and
  \refL{lem:entropy_cliquefree} below shows that $\Ent(W)\le \frac12$, with
  equality only if $W\in R_2$ (up to equivalence). Furthermore, it is easy
  to see that if $W\in R_r$ is triangle-free, then $W(x,y)\neq 1$ a.e., and
  thus $W=\wwx{2,0}$. (Use \refT{thm:ErdosSimonovits} below, or note that
$\max\set{W(x,y),\frac12}$ is another triangle-free graphon.) 
Thus, as in \refE{Ebipartite}, $\wwx{2,0}$ represents the unique graph limit in
$\cqoo$ with maximum entropy.

\refT{thm:T4} and \ref{thm:TR} 
thus say that $|\cQ_n|=2^{\frac12\binom n2+o(n^2)}$,
as shown by Erd{\H o}s, Kleitman and Rothschild \cite{EKR76}.
(They also
proved that almost all triangle-free graphs are bipartite; this seems
related to the fact that the two graph classes have the same maximum entropy
graph limit, although we do not know any direct implication.)

\refT{thm:T3} says that if $G_n$ is a
uniformly random (labelled or unlabelled) triangle-free graph, then $G_n\to
\wwx{2,0}$ in probability. 

The same argument applies to $K_t$-free graphs, for any $t\ge2$. The
colouring number is $t-1$ and thus the number of such graphs of order $n$ is
$2^{\frac{r-2}{r-1}\binom n2+o(n^2)}$, as shown in \cite{EKR76}.
(See also
%More precisely, almost all $K_t$-free graphs are $(t-1)$-partite, see
%Kolaitis, Pr{\"o}mel and Rothschild 
\cite{KPR85,KPR87}.)
The unique graph limit of maximum entropy is
represented by $\wwx{t-1,0}$.
Thus \refT{thm:T3} applies and shows that, hardly
surprising, a random $K_t$-free graph converges (in probability) to the
graphon $\wwx{t-1,0}$. 
\end{example}

\begin{example}[Split graphs]
Another simple application of \refT{thm:T3} is given in 
\cite[Section  10]{SJ255}, where it is shown that the class of \emph{split
  graphs} has a unique graph limit with maximal entropy, represented by the
graphon 
$\wwx{2,1}$; this is thus the limit (in probability) of a uniformly  random
split graph.  
Recall that the class of split graphs equals $\cC(2,1)$ in \refR{Rcrs}; in
other words,
a graph is a split graph if its vertex set can be partitioned
into two sets, one of which is a clique and the other one is an isolated
set. 
(Equivalently, $G$ is a split graph if and only if $p(G;\wwx{2,1})>0$.)
\end{example}

Our final example is more complicated, and we have less complete results.

\begin{example}[String graphs]
  A \emph{string graph} is the intersection graph of a family of curves in
  the plane. In other words, $G$ is a string graph if there exists a
  collection \set{A_v:v\in V(G)} of curves such that 
$ij\in E(G)\iff A_i\cap A_j\neq\emptyset$.
It is easily seen that we obtain the same class of graphs if we allow the
sets $A_v$ to be arbitrary arcwise connected sets in the plane.

It is shown by Pach and T\'oth \cite{PachToth06} that the number of string
graphs of order $n$ is $2^{\frac34\binom n2+o(n^2)}$. 
Thus, Theorems \ref{thm:T4}
and \ref{thm:TR} hold with maximum entropy $\frac34$ and colouring number
$4$.

We study this further by interpreting the proof of \cite{PachToth06} in our
graph limit context.
To show a lower bound on the number of string graphs, \cite{PachToth06}
shows that every graph in the class $\cC(4,4)$ is a 
string graph. 
(This was proved already in \cite[Corollary 2.7]{Kratochvil86}.)
A minor modification of their construction is as follows:
Let $G$ be a graph with a partition $V(G)=\bigcup_{i=1}^4 V_i$ such that
each $V_i$ is a complete subgraph of $G$. Consider a drawing of the graph
$K_4$ in the plane, with vertices $x_1,...,x_4$
and non-crossing edges. Replace each edge $ij$ in $K_4$
by a number of parallel curves $\gamma_{vw}$ from $x_i$ to $x_j$, indexed
by pairs $(v,w)\in V_i\times V_j$. (All curves still non-intersecting except
at the end-points.) Choose a point $x_{vw}$ on each curve $\gamma_{vw}$, and
split $\gamma_{vw}$ into the parts $\gamma^*_{vw}$ from $x_i$ to $x_{vw}$  and
$\gamma^*_{wv}$ from  $x_{vw}$ to $x_j$, with $x_{vw}$ included in both parts.
If $v$ is a vertex in $G$, and $v\in V_i$, let $A_v$ be the (arcwise
connected) set consisting of $x_i$ and the curves $\gamma^*_{vw}$ for all 
$w\notin V_i$ such that $vw\in E(G)$. Then $G$ is the intersection graph
defined by the collection \set{A_v}, and thus $G$ is a string graph.

It follows, see \refR{Rcrs}, 
that if $\cQ$ is the class of string graphs, then $\wwx{4,4}\in\cqoo$.

To show an upper bound, Pach and T\'oth
\cite{PachToth06} consider the graph $G_5$, which
is the intersection graph of the family of the 15 subsets of order 1 or 2 of
\set{1,\dots,5}. They show that $G_5$ is not a string graph, but that
$G_5\in\cC(5,s)$ for every $0\le s\le 5$. Thus $\cC(5,s)\not\subseteq \cQ$,
and thus $\wwx{5,s}\notin\cqoo$, see \refR{Rcrs}. 

Consequently, we have $\wwx{4,4}\in\cqoo$ but $\wwx{5,s}\notin\cqoo$, for
all $s$. Hence 
\refT{thm:TR} shows
that the colouring number $r=4$, see
\eqref{eq:r2}, and that $\wwx{4,4}$ is one graphon in $\cqoo$ with maximal
entropy. 

However, in this case the graph limit of maximal entropy is \emph{not} unique.
Indeed, the construction above of string graphs works for any planar graph
$H$ instead of $K_4$, and $G$ such that its vertex set can be partitioned
into cliques $V_i$, $i\in V(H)$, with no edges in $G$ between $V_i$
and $V_j$ unless $ij\in E(H)$. 
(See \cite[Theorem 2.3]{Kratochvil86}.)
Taking $H$ to be $K_5$ minus an edge, we thus
see that if $G\in\cC(4,4)$, and we replace the clique on $V_1$ by 
a disjoint union of two cliques
(on the same vertex set $V_1$, leaving all other
edges), 
then the new graph is also a string graph. 
It follows by taking the limit of a suitable sequence of such graphs, 
or by \refL{lem:hered} below,
that if 
$I_i\=((i-1)/4,i/4]$ and $I_1$ is split into
  $I_{11}\=(0,a]$ and $I_{12}\=(a,1/4]$, where 
$0\le a\le 1/8$, 
then the graphon  $\wxxxa\in R_4$ 
obtained from $\wwx{4,4}$ by replacing the 
value 1 by $0$ on $(I_{11}\times I_{12})\cup(I_{12}\times I_{11})$
satisfies  $\wxxxa\in\cqoo\cap R_4=\cqoo^*$. 
Explicitly, 
\begin{equation*}
\wxxxa(x,y)=
\begin{cases}
1/2 & \text{on } \bigcup_{i\neq j}(I_i\times I_j);\\
0  & \text{on } (I_{11}\times I_{12}) \cup (I_{12}\times I_{11}); \\
1  & \text{on }  (I_{11}\times I_{11}) \cup(I_{12}\times I_{12}) 
 \cup \bigcup_{i=2}^4 (I_i\times I_i).
\end{cases}
	  \end{equation*}
Thus $\wxxx_0=\wwx{4,4}$, but the graphons
$\wxxx_a$ for $a\in[0,1/8]$ are not equivalent, for example because they
have different edge densities 
\begin{equation*}
\iint \wxxx_a=\frac58-\frac a2+2a^2=\frac{19}{32}+2\Bigpar{\frac18-a}^2.  
\end{equation*}
Thus there are infinitely many graph limits in $\cqoo^*=\cqoo\cap R_4$. 
(We do not know
whether there are further such graph limits.)

Consequently, \refT{thm:T3} does not apply to string graphs. We do not know
whether a uniformly random string graph converges (in probability)
to some graph limit as the size tends to infinity, and if so, what the limit
is. We leave this as an open problem.
\end{example}

\section{Some auxiliary facts}
We start by recalling some basic facts about the binary entropy.  First note
that $h$ is concave on $[0,1]$. In particular if $0 \le x_1 \le x_2 \le 1$, then
$$h(x_2)-h(x_1) \le h(x_2-x_1) - h(0) =h(x_2-x_1),$$
and
$$-(h(x_2)-h(x_1))= h(x_1)-h(x_2) = h(1-x_1) - h(1-x_2) \le h(x_2-x_1);$$
hence
\begin{equation}
\label{eq:h_basic}
|h(x_2)-h(x_1)| \le h(x_2-x_1).
\end{equation}

The following simple lemma relates 
$\binom Nm$ to the binary entropy.
\begin{lemma} %Lemma A
\label{lem:entropyBinomial}
For integers $N \ge m \ge 0$, we have 
$$
\binom{N}{m} \le
\left(\frac{N}{m}\right)^m \left(\frac{N}{N-m}\right)^{N-m} = 2^{N h(m/N)}.
$$
\end{lemma}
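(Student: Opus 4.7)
The plan is to prove the equality first and then the inequality.

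For the equality, I would just expand directly from the definition of $h$. Taking logarithms (base $2$),
\begin{equation*}
N h(m/N) = -m \logb(m/N) - (N-m)\logb((N-m)/N) = m\logb(N/m) + (N-m)\logb(N/(N-m)),
\end{equation*}
so exponentiating recovers $(N/m)^m (N/(N-m))^{N-m}$. The boundary cases $m=0$ and $m=N$ are handled by the convention $0\logb 0 = 0$ (equivalently $0^0=1$), consistent with the continuity of $h$ on $\oi$ noted earlier.

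For the inequality, I would use the standard binomial identity trick. For the nontrivial cases $0<m<N$, set $p\=m/N\in(0,1)$ and apply the binomial theorem:
\begin{equation*}
1 = (p+(1-p))^N = \sum_{k=0}^{N}\binom{N}{k} p^k(1-p)^{N-k} \ge \binom{N}{m} p^m(1-p)^{N-m}.
\end{equation*}
Substituting $p=m/N$ and rearranging gives
\begin{equation*}
\binom{N}{m} \le p^{-m}(1-p)^{-(N-m)} = \Bigpar{\frac{N}{m}}^{m}\Bigpar{\frac{N}{N-m}}^{N-m},
\end{equation*}
which combined with the equality above yields the claim. For $m=0$ and $m=N$, both sides equal $1$.

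There is no real obstacle here; the only minor subtlety is handling the boundary $m\in\{0,N\}$, but this is immediate from the $0^0=1$ convention used to make $h$ continuous on $\oi$.
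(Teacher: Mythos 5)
Your proof is correct and takes essentially the same route as the paper's: the paper also sets $p=m/N$ and observes that $\binom{N}{m}p^m(1-p)^{N-m}\le 1$, phrasing this probabilistically as $\Pr[X=m]\le 1$ for $X\sim\mathrm{Bin}(N,p)$, while you obtain the same bound by picking out one term of the binomial expansion of $(p+(1-p))^N$. Your explicit treatment of the boundary cases $m\in\{0,N\}$ is a harmless addition.
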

\begin{proof}
Set $p=\xfrac{m}{N}$. 
If $X$ has
the binomial distribution $\mathrm{Bin}(N,p)$,
then 
$$1 \ge \Pr[X=m] = \binom{N}{m} p^m (1-p)^{N-m}
$$
and thus 
\begin{equation*}
\binom Nm \le p^{-m} (1-p)^{-(N-m)}
=
\left(\frac{N}{m}\right)^m \left(\frac{N}{N-m}\right)^{N-m}
=2^{Nh(p)}.
\qedhere
\end{equation*}
\end{proof}

We will need the following simple lemma about  hereditary classes of graphs
\cite{SJ255}:
\begin{lemma} %Lemma 0
\label{lem:hered}
Let $\QQ$ be a hereditary class of graphs and let $W$ be a graphon.
Then $W \in \cqoo$ if and only if $p(F;W)=0$ when $F \not\in \QQ$.
\end{lemma}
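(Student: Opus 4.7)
The plan is to prove both directions by working with the random graph $G(n,W)$ and invoking the standard fact from graph limit theory that $G(n,W)\to W$ almost surely in $\cuq$ as $n\to\infty$.

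For the forward direction, assume $W\in\cqoo$ and pick a sequence $G_n\in\QQ$ with $|G_n|\to\infty$ and $G_n\to W$ in $\cuq$. For any fixed $F\notin\QQ$ and any $n$ with $|G_n|\ge|F|$, the hypothesis that $\QQ$ is hereditary together with $G_n\in\QQ$ forces every induced subgraph of $G_n$ to lie in $\QQ$; in particular $F$ cannot occur as an induced subgraph of $G_n$, so $p(F;G_n)=0$. Letting $n\to\infty$ and using continuity of $p(F;\cdot)$ on $\cuq$ gives $p(F;W)=0$.

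For the converse, assume $p(F;W)=0$ for every $F\notin\QQ$. By \eqref{gnw}, for each $n$ and each labelled graph $H\in\LL_n$ whose unlabelled isomorphism class is not in $\QQ$,
\begin{equation*}
\P(G(n,W)=H)=p(H;W)=0.
\end{equation*}
Summing over all such $H$ yields $\P(G(n,W)\in\QQ)=1$. Combining this with the almost sure convergence $G(n,W)\to W$, we can pick realisations $G_n\in\QQ$ with $G_n\to W$ in $\cuq$, so $W\in\cqoo$.

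The only substantive ingredient beyond the definitions is the a.s.\ convergence $G(n,W)\to W$, which I would simply quote from the graph limits literature (it is proved via concentration of each statistic $p(H;G(n,W))$ around $p(H;W)$ combined with countability of the test graphs $H$). The rest is routine bookkeeping with \eqref{gnw}, and the hereditary hypothesis is used in an essential way only in the forward direction, to convert $G_n\in\QQ$ into $p(F;G_n)=0$ for forbidden $F$.
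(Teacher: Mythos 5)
Your proof is correct and follows essentially the same route as the paper: the forward direction uses that a hereditary $\QQ$ forces $p(F;G)=0$ for every $G\in\QQ$ and $F\notin\QQ$ and then passes to the limit by continuity, and the converse uses \eqref{gnw} to see that $G(n,W)\in\QQ$ almost surely and then invokes the a.s.\ convergence $G(n,W)\to W$. The only cosmetic difference is that you unfold the continuity argument via an explicit sequence $G_n\to W$, whereas the paper states it directly for $W$ in the closure $\cqq$.
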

\begin{proof}
If $F \not\in \QQ$, then $p(F;G)=0$ for every $G \in \QQ$
since $\QQ$ is hereditary, and thus $p(F;W)=0$ for every $W \in \cqq$ by
continuity.

For the converse, assume that $p(F;W)=0$ when $F \not\in \QQ$.
Thus $p(F;W)>0\implies F\in\QQ$.
By \eqref{gnw}, if $\P(G(n,W)=H)>0$, then $p(H;W)>0$ and thus $H\in\QQ$.
Hence, $G(n,W)\in\QQ$ almost surely.
The claim follows from 
the fact~\cite{BCLSV1} that almost surely $G(n,W)$ converges to $W$ as \ntoo.
\end{proof}

Next we recall that the \emph{cut norm} of an $n \times n$ matrix
$A=(A_{ij})$  is
defined by  
$$\|A\|_\square := \frac{1}{n^2} \max_{S,T \subseteq [n]} \left|\sum_{i \in S, j \in T} A_{ij} \right|.$$ 
Similarly, the \emph{cut norm} of a measurable $W:[0,1]^2 \to \mathbb{R}$ is defined as 
$$\|W\|_\square = \sup \left| \iint f(x) W(x,y) g(y) \dd x \dd y\right|,$$
where the supremum is over all measurable functions $f,g:[0,1] \to
\{0,1\}$. 
(See \cite{BCLSV1} and \cite{SJ249} for other versions, equivalent within
constant factors.)
We use also the notation, for two graphons $W_1$ and $W_2$, 
\begin{equation}\label{d}
d_\square(W_1,W_2)\=  \| W_1 - W_2\|_\square.
\end{equation}
The \emph{cut distance} between two graphons  
$W_1$ and $W_2$ is defined as
\begin{equation}\label{dcut}
\delta_\square(W_1,W_2)\= \inf_{W_2'} d_\square(W_1,W'_2)
=
\inf_{W_2'} \| W_1 - W'_2\|_\square,  
\end{equation}
where the infimum is over all graphons $W'_2$ that are 
equivalent to $W_2$.
(See \cite{BCLSV1} and \cite{SJ249} for other, equivalent, definitions.)
The cut distance is a pseudometric on $\cW_0$, with $\gd_\square(W_1,W_2)=0$
if and only if $W_1$ and $W_2$ are equivalent.

The cut distance between two graphs $F$ and $G$ is defined as
$\delta_\square(F,G)=\delta_\square(W_F,W_G)$.  
We similarly write $\delta_\square(F,W)=\delta_\square(W_F,W)$, 
$d_\square(F,W)=d_\square(W_F,W)$ and so on.

The cut distance is a central notion in the theory of graph limits. For
example it is known (see~\cite{BCLSV1} and \cite{LovaszBook})  
that a graph sequence $(G_n)$ with $\vv{G_n} \to \infty$ converges to a graphon $W$ if and only if the sequence $(W_{G_n})$ 
converges to $W$ in cut distance. Similarly, convergence of a sequence of
graphons  in $\mathcal{W}_0$ is the same as convergence in cut distance; hence,
the cut distance induces a metric on $\cuoo$ that defines its
topology. 

Let $\PP$ be a partition of the interval $[0,1]$ into $k$ measurable sets
$I_1,\ldots,I_k$.  Then $I_1,\ldots,I_k$ divide the unit square  
$[0,1]^2$ into $k^2$ measurable sets $I_i \times I_j$. We denote the
corresponding $\sigma$-algebra by $\BB_\PP$; note that if $W$ is a graphon,
then $\E[W\mid\BB_\PP]$ is the graphon that is constant on each set
$I_i\times I_j$ and obtained by averaging $W$ over each such set.
A partition of the interval $[0,1]$ 
into $k$ sets is called an \emph{equipartition} if all sets are of measure
$1/k$. 
We let $\cPP_k$ denote the equipartition of $\oi$ into $k$ intervals of
length $1/k$, and 
write, for any graphon $W$,
\begin{equation}  \label{bW}
\bW_k\=\E[W\mid\BB_{\cPP_k}].
\end{equation}

Similarly, if  $\PP$ is  a partition of $[n]$ into sets $V_1,\ldots,V_k$, then we consider the 
corresponding partition $I_1,\ldots,I_k$ of $[0,1]$ (that is $x \in (0,1]$ belongs to $I_j$ if and only if $\lceil x_j \rceil \in V_j$) and again 
we denote the corresponding $\sigma$-algebra on $[0,1]^2$ by $\BB_\PP$. A
partition  $\PP$  of $[n]$ into $k$ sets is called an \emph{equipartition}  
if each part is of size $\lfloor n/k \rfloor$ or $\lceil n/k \rceil$.

The 
graphon version of the
weak regularity lemma proved by Frieze and Kannan~\cite{MR1723039}, 
see also \cite{LovaszSzegedy:analyst} and 
\cite[Sections 9.1.2 and  9.2.2]{LovaszBook}, %Lemma 9.3, Exercise 9.7 
says that
for every every graphon $W$ and every $k \ge 1$,
there is an
equipartition $\PP$ of $[0,1]$ into $k$ sets such that 
\begin{equation}
 \label{weakRegGraphon}
\left\| W - \Ex[W\mid\BB_\PP] \right\|_\square \le \frac{4}{\sqrt{\logb k}}.
\end{equation}

Let us close this section with the following simple lemma.
 Part (ii) has been proved by
Chatterjee and Varadhan \cite{ChatterjeeVaradhan}, but
we include a (different) proof for completeness.

\begin{lemma}
\label{lem:EntSemicont}
The function $\Ent(\cdot)$ satisfies the following properties:
\begin{romenumerate}
\item 
If\/ $W$ is a graphon and $\mathcal{P}$ is a measurable partition of
$[0,1]$, then 
$$\Ent(\Ex[W\mid\BB_\PP]) \ge \Ent(W).$$
\item 
The function $\Ent(\cdot)$ is lower semicontinuous on $\cW_0$
(and, equivalently, on $\cuoo$). 
I.e., if\/ $W_m\to W$ in $\mathcal{W}_0$ as \mtoo, then
$$
\limsup_{m \to \infty} \Ent(W_m) \le \Ent(W).
$$
\end{romenumerate}
\end{lemma}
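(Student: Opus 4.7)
For part (i), I would apply Jensen's inequality in conditional form. Since $h$ is concave on $\oi$, for any graphon $W$ and any measurable partition $\PP$ of $\oi$, we have $\E[h(W)\mid \BB_\PP] \le h(\E[W\mid\BB_\PP])$ almost everywhere on $\oi^2$; integrating against Lebesgue measure and using that conditional expectation preserves the integral yields
\begin{equation*}
\Ent(W) = \int h(W) = \int \E[h(W)\mid\BB_\PP] \le \int h\bigpar{\E[W\mid\BB_\PP]} = \Ent\bigpar{\E[W\mid\BB_\PP]}.
\end{equation*}

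For part (ii), my plan is to sandwich $\Ent$ between quantities that are continuous on $\cW_0$, passing through the entropy of the sample random graph $G(n,V)$. For any graphon $V$, conditioning on the iid uniform latent variables $X_1,\dots,X_n$ makes the edges of $G(n,V)$ independent Bernoulli with parameters $V(X_i,X_j)$, so
\begin{equation*}
\HH\bigpar{G(n,V)\bigm| X_1,\dots,X_n} = \sum_{i<j}\E\bigsqpar{h(V(X_i,X_j))} = \binom{n}{2}\Ent(V),
\end{equation*}
and since conditioning can only reduce entropy, $\HH(G(n,V)) \ge \binom{n}{2}\Ent(V)$. Moreover, by \eqref{gnw} the distribution of $G(n,V)$ on the finite set $\cL_n$ depends on $V$ only through the continuous functionals $\set{p(H;V):H\in\cL_n}$, and Shannon entropy is continuous on the probability simplex of a finite set; so $V \mapsto \HH(G(n,V))$ is continuous on $\cW_0$ for every fixed $n$.

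Combining this with the identity $\HH(G(n,W))/\binom{n}{2} \to \Ent(W)$ as $\ntoo$ (Aldous \cite{Aldous}; see also \cite[Appendix~D.2]{SJ249}), for $W_m\to W$ in $\cW_0$ and any fixed $n$ we obtain
\begin{equation*}
\limsup_{m\to\infty} \Ent(W_m) \le \limsup_{m\to\infty} \frac{\HH(G(n,W_m))}{\binom{n}{2}} = \frac{\HH(G(n,W))}{\binom{n}{2}},
\end{equation*}
and sending $n\to\infty$ yields $\limsup_m \Ent(W_m) \le \Ent(W)$. The main obstacle is the upper estimate $\HH(G(n,W))/\binom{n}{2} \le \Ent(W) + o(1)$ in the cited identity: the matching lower bound is just the easy conditioning inequality above, but the upper bound is a genuine Shannon--McMillan-type statement for exchangeable random graphs, which I would invoke from the literature rather than reprove here. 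A direct approach through part (i) and step-function approximation is tempting, but fails because cut-distance convergence $W_m\to W$ does not preserve any fixed partition of $\oi$ (the partition realising weak regularity for each $W_m$ may be completely different from one realising it for $W$), so the natural averages $\E[W_m\mid\BB_{\cPP_k}]$ need not converge even though $\Ent$ of such step functions is easy to compute; the random-graph route sidesteps this by working with partition-free continuous functionals.
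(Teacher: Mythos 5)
Your part (i) is the paper's argument verbatim: Jensen plus concavity of $h$. Your part (ii) is correct, but it takes a genuinely different route from the paper, and — more importantly — your stated reason for rejecting the ``direct approach through part (i) and step-function approximation'' is mistaken, because that direct approach is exactly what the paper does. The paper first notes that one may assume $\|W_m-W\|_\square\to0$ in cut \emph{norm}, not merely cut distance: since $\Ent$ is invariant under weak equivalence, each $W_m$ may be replaced by an equivalent graphon that nearly achieves the infimum in the definition of $\gdcut(W_m,W)$. Once the convergence is in cut norm, a \emph{fixed} equipartition $\cPP_k$ into $k$ intervals works with no reference to weak regularity at all: the value of $\E[W_m\mid\BB_{\cPP_k}]$ on $I_i\times I_j$ is $k^2\int_{I_i\times I_j}W_m$, and testing the cut norm against $f=\etta_{I_i}$, $g=\etta_{I_j}$ forces these averages to converge to those of $W$. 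Hence $\E[W_m\mid\BB_{\cPP_k}]\to\E[W\mid\BB_{\cPP_k}]$ a.e., dominated convergence gives $\Ent(\E[W_m\mid\BB_{\cPP_k}])\to\Ent(\E[W\mid\BB_{\cPP_k}])$, part (i) gives $\limsup_m\Ent(W_m)\le\Ent(\E[W\mid\BB_{\cPP_k}])$, and letting $k\to\infty$ finishes via $\E[W\mid\BB_{\cPP_k}]\to W$ a.e.\ and dominated convergence again. Your alternative routes through the random-graph entropy identity $\HH(G(n,W))/\binom n2\to\Ent(W)$, using continuity of $W\mapsto\HH(G(n,W))$ for fixed $n$ and the conditioning lower bound $\HH(G(n,W))\ge\binom n2\Ent(W)$. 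This is valid and the continuous-functional sandwich is a nice idea, but it imports the nontrivial upper half of the Aldous/\cite{SJ249} identity as a black box, whereas the paper's proof of this lemma is elementary and self-contained given the basic cut-distance facts already in play.
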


\begin{remark}
$\Ent(\cdot)$   is \emph{not} continuous. For example, let $G_n$ be a
  quasirandom sequence of graphs with $G_n\to W=\frac12$ (a constant
  graphon);
then $W_{G_n}\to W=\frac 12$ in $\cW_0$, but $\Ent(W_{G_n})=0$ and $\Ent(W)=1$.
\end{remark}

\begin{proof}
Part~(i) follows from Jensen's inequality and  concavity of $h$. 

To prove (ii),
note that we can assume $\|W_m - W \|_\square \to 0$. For every $k \ge 1$,
let $\cPP_k$ be the partition of $[0,1]$  
into $k$  consecutive  intervals of equal measure $1/k$. 
Consider the step graphons
$\Ex[W_{m}\mid \BB_{\cPP_k}]$ and $\Ex[W\mid \BB_{\cPP_k}]$. For each
$k$, $\Ex[W_{m}\mid \BB_{\cPP_k}]$ converges to $\Ex[W\mid \BB_{\cPP_k}]$ almost
everywhere as $\mtoo$, and
thus by \eqref{entw} and dominated convergence,
$$
\lim_{m \to \infty} \Ent(\Ex[W_{m}\mid \BB_{\cPP_k}]) = \Ent(\Ex[W\mid \BB_{\cPP_k}]).$$
Consequently, using (i),
$$\limsup_{m \to \infty} \Ent(W_m) \le \limsup_{m \to \infty}
\Ent(\Ex[W_{m}\mid \BB_{\cPP_k}])=\Ent(\Ex[W\mid \BB_{\cPP_k}]).
$$
Finally, let $k \to \infty$. Then $\Ex[W\mid \BB_{\cPP_k}] \to W$ almost
everywhere, and 
thus $\Ent(\Ex[W\mid \BB_{\cPP_k}])\to\Ent(W)$.
\end{proof}

\section{Number of graphs and Szem\'eredi partitions}
In this section we prove some of the key lemmas  needed in this paper. These lemmas 
provide various estimates on the number of graphs on $n$ vertices that are
close to a graphon in cut distance. For an integer $n \ge 1$, a parameter
$\delta>0$, 
and a graphon $W$, define 
\begin{align}\label{hN}
\widehat{N}_\square(n,\delta;W) 
&:= \left| \{G \in \LL_n: d_\square( W_G,W) \le \delta \} \right|    
\intertext{and}
N_\square(n,\delta;W) 
&:= \left| \left\{ G \in \LL_n: \delta_\square(G,W) \le \delta \right\} \right|.
\end{align}

Since 
$\delta_\square(G,W) \le d_\square( W_G,W)$, \cf{} \eqref{dcut},
we have trivially
\begin{equation}\label{hNN}
\widehat{N}_\square(n,\delta;W) \le N_\square(n,\delta;W).   
\end{equation}
We will show an estimate in the opposite direction, showing that for our
purposes, 
$\widehat{N}_\square(n,\delta;W)$ and $N_\square(n,\delta;W)$ are not too
different. We begin with the following estimate.
We recall that $\bW_k\=\E[W\mid\BB_{\cPP_k}]$ is obtained by averaging $W$ over
squares of side $1/k$, see \eqref{bW}.

\begin{lemma} 
\label{lem:C}
Let $W$ be a graphon. If\/ $G\in\LL_n$, then there is a
graph $\tG\in\LL_n$  isomorphic to $G$ such that
\begin{equation}\label{lemc}
d_\square(\tG,W) \le 
\delta_\square(G,W) + 2 d_\square (W,\bW_n)
+\frac{18}{\sqrt{\logb n}}.  
\end{equation}
\end{lemma}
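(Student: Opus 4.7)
The plan is to pass from a near-optimal measure-preserving transformation witnessing $\delta_\square(G,W)$ to an honest permutation of $[n]$, absorbing the conversion error into the terms $2\,d_\square(W,\bW_n)$ and $18/\sqrt{\log_2 n}$ already present in \eqref{lemc}.

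First, fix $\eta>0$. By \eqref{dcut} choose a measure-preserving $\sigma:[0,1]\to[0,1]$ with $\|W_G^\sigma-W\|_\square\le\delta_\square(G,W)+\eta$, so the triangle inequality yields
\[
 \|W_G^\sigma-\bW_n\|_\square\le\delta_\square(G,W)+\eta+d_\square(W,\bW_n).
\]
Since $\bW_n$ is $\BB_{\cPP_n}$-measurable, and the cut norm of a function equals its ``$[0,1]$-valued'' variant $\sup_{u,v:[0,1]\to[0,1]}|\int fu(x)v(y)\dd x\dd y|$ (via a layer-cake decomposition of the test functions), conditional expectation with respect to $\BB_{\cPP_n}$ is a contraction against $\bW_n$. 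Applied to $W_G^\sigma-\bW_n$ this gives
\[
 \|\E[W_G^\sigma\mid\BB_{\cPP_n}]-\bW_n\|_\square\le\delta_\square(G,W)+\eta+d_\square(W,\bW_n).
\]
A direct computation identifies $\E[W_G^\sigma\mid\BB_{\cPP_n}]$ as the step graphon on $\cPP_n$ whose matrix is $\lambda A\lambda^T$, where $A$ is the adjacency matrix of $G$ and $\lambda_{ij}:=n\cdot m(I_i\cap\sigma^{-1}(I_j))$ is doubly stochastic.

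The main technical step is then to produce a permutation $\pi\in S_n$ with
\[
\|P_\pi A P_\pi^T-\lambda A\lambda^T\|_\square\le 18/\sqrt{\log_2 n}.
\]
Once this is done, setting $\tG:=\pi(G)$ (which is isomorphic to $G$ and has graphon with matrix $P_\pi A P_\pi^T$) and applying the triangle inequality once more produces
\[
 d_\square(\tG,W)\le\tfrac{18}{\sqrt{\log_2 n}}+\bigpar{\delta_\square(G,W)+\eta+d_\square(W,\bW_n)}+d_\square(W,\bW_n),
\]
and letting $\eta\to 0$ gives \eqref{lemc}. My plan for obtaining $\pi$ is to apply the weak regularity lemma \eqref{weakRegGraphon} to $W_G$ with a parameter $k=k(n)$ to replace $A$ by a block-constant matrix $A'$ with only $k$ distinct block-averages (at cost $\le 4/\sqrt{\log_2 k}$ in cut norm), and then to round $\lambda$ -- viewed block-wise as a doubly stochastic matrix between the $k$ blocks -- to a permutation of blocks by a Birkhoff/Hall-type argument, lifting to a permutation of $[n]$ within each block.

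The main obstacle is the \emph{quadratic} dependence of $\lambda A\lambda^T$ on $\lambda$: Birkhoff--von Neumann expresses $\lambda=\sum_t c_t P_{\pi_t}$, but a random choice $\pi\sim(c_t)$ produces only the ``diagonal'' average $\E[P_\pi A P_\pi^T]=\sum_t c_t P_{\pi_t}AP_{\pi_t}^T$, which differs from the ``off-diagonal'' sum $\lambda A\lambda^T=\sum_{t,t'}c_t c_{t'}P_{\pi_t}AP_{\pi_{t'}}^T$ by a cross-term involving two \emph{independent} permutations. A direct randomized rounding therefore fails, and the role of the weak regularity lemma is precisely to collapse $A$ to a low-complexity object on which this cross-term becomes a coarse-scale quantity that can be handled by an explicit block-level matching.
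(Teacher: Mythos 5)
Your approach is genuinely different from the paper's. The paper considers a random graph $G'$ on $[n]$ sampled from the averaged graphon $\bW_n$; by a standard sampling lemma, with high probability $\dcut(G',\bW_n)\le 10/\sqrt n$. It then applies a cited rearrangement theorem (Theorem 9.29 in \cite{LovaszBook}) to the two graphs $G,G'\in\LL_n$, obtaining a relabelling $\tG$ with $\dcut(\tG,G')\le\gdcut(G,G')+17/\sqrt{\logb n}$, and chains triangle inequalities through $G'$ and $\bW_n$ to reach $W$. You instead work directly with a near-optimal measure-preserving $\sigma$, project through $\E[\cdot\mid\BB_{\cPP_n}]$, and attempt to round the doubly stochastic matrix $\lambda$ to a permutation. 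Your preliminary steps (choice of $\sigma$, triangle inequality, contraction of conditional expectation in cut norm via the $[0,1]$-valued test-function characterization, and the identification $\E[W_G^\sigma\mid\BB_{\cPP_n}]=\lambda A\lambda^T$) are all correct.

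The problem is the ``main technical step'': the claim that some $\pi\in S_n$ satisfies $\|P_\pi A P_\pi^T-\lambda A\lambda^T\|_\square\le 18/\sqrt{\logb n}$. This is not merely left unproven; for an arbitrary near-optimal $\sigma$ it is false. Take $W\equiv\tfrac12$ (so $\bW_n\equiv\tfrac12$ and $\dcut(W,\bW_n)=0$) and $G=K_{n/2,n/2}$. Since the only graphon equivalent to $\tfrac12$ is $\tfrac12$ itself, every measure-preserving $\sigma$ satisfies $\|W_G^\sigma-W\|_\square=\gdcut(G,W)$, i.e.\ every $\sigma$ is optimal; you might pick $\sigma(x)=nx\bmod 1$, for which $\lambda_{ij}\equiv 1/n$ and hence $\lambda A\lambda^T\equiv\tfrac12$. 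But for \emph{every} permutation $\pi$ the graphon $W_{\pi(G)}$ is a balanced complete bipartite step graphon with $\|W_{\pi(G)}-\tfrac12\|_\square\ge\tfrac18$. The lemma itself is fine in this example (both sides of \eqref{lemc} are about $\gdcut(G,W)\approx\tfrac18$), but your chain of inequalities misplaces the error budget: the term $\gdcut(G,W)$ is parked in your conditional-expectation bound as an upper estimate for $\|\lambda A\lambda^T-\bW_n\|_\square$, yet in this example that norm is actually $0$ and the slack is genuinely needed in $\|W_{\tG}-\lambda A\lambda^T\|_\square$, which your step caps at $18/\sqrt{\logb n}$. The weak-regularity-plus-Birkhoff plan you sketch cannot repair this: after collapsing $A$ to two blocks (which regularity leaves essentially unchanged here), the block-level doubly stochastic matrix is the uniform $2\times2$ matrix, and rounding it to a permutation incurs a constant cut-norm error, not $O(1/\sqrt{\logb n})$. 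To make your plan work you would have to choose $\sigma$ with extra structure beyond near-optimality, or, as the paper does, compare $G$ with an actual $\{0,1\}$-valued step graph $G'$ close to $W$ rather than with the averaged object $\lambda A\lambda^T$, so that the rearrangement step compares two graphs on $[n]$ and its error budget can legitimately include $\gdcut(G,G')$.
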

\begin{proof}
Regard $\bW_n$ as a weighted graph on $n$ vertices, and 
consider the random graph $G(\bW_n)$ on $[n]$, defined by connecting
each pair 
$\set{i,j}$ of nodes by an edge $ij$ with probability
$\bW_n(i/n,j/n)$,
%=n^2\int_{\frac{i-1}{n}}^{\frac{i}{n}}
%\int_{\frac{j-1}{n}}^{\frac{j}{n}} W(x,y) \dd x \dd y$,
independently for different pairs. 
By \cite[Lemma 10.11]{LovaszBook}, with positive
probability (actually at least $1-e^{-n}$),
$$
d_\square(G(\bW_n), \bW_n) \le \frac{10}{\sqrt{n}}.$$
Let $G'$ be one realization of $G(\bW_n)$ with
\begin{equation}
\label{eq:randomClose}
d_\square(G', \bW_n) \le \frac{10}{\sqrt{n}}.
\end{equation}
Then,
by the triangle inequality and \eqref{eq:randomClose},
\begin{equation}\label{c2}
  \begin{split}
\delta_\square(G,G') 
&\le  \delta_\square(G, W) +
\gdcut(W,\bW_n)+\gdcut(\bW_n,G')
\\&
\le 
\delta_\square(G,W)+\dcut(W,\bW_n)+\frac{10}{\sqrt{n}}.
  \end{split}
\end{equation}
Since $G$ and $G'$ both are graphs on $[n]$, we can 
by \cite[Theorem 9.29]{LovaszBook}
permute the labels of
$G$ and obtain a graph $\tG\in\LL_n$ such that
\begin{equation}\label{c3}
\dcut(\tG,G') \le
\delta_\square(G,G')+\frac{17}{\sqrt{\logb n}}.
\end{equation}
Consequently, 
by the triangle inequality again
and \eqref{eq:randomClose}--\eqref{c3},
\begin{equation*}
  \begin{split}
\dcut(\tG,W)
&\le \dcut(\tG,G')+\dcut(G',\bW_n)+\dcut(\bW_n,W)  
\\&
\le 
\delta_\square(G,G')+\frac{17}{\sqrt{\logb n}}
+\frac{10}{\sqrt{ n}}
+\dcut(W,\bW_n)
\\&
\le 
\delta_\square(G,W)
+2d_\square(W,\bW_n)
+\frac{17}{\sqrt{\logb n}}
+\frac{20}{\sqrt{n}}.
  \end{split}
\end{equation*}
The claim follows for $n>2^{20}$, say; for smaller $n$ it is trivial since
$\dcut(\tG,W)\le1$ for every $\tG$.
\end{proof}

\begin{lemma}\label{L:Cx} 
  For any graphon $W$, $\gd>0$ and $n\ge1$,
  \begin{equation}
N_\square(n,\delta;W)
\le n!
\widehat{N}_\square(n,\delta+\eps_n;W),
  \end{equation}
where $\eps_n\=18/\sqrt{\logb n}+2\dcut(\bW_n,W)\to0$ as \ntoo.
\end{lemma}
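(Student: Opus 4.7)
The plan is to apply \refL{lem:C} directly. Set $\eps_n := 18/\sqrt{\logb n} + 2 \dcut(\bW_n,W)$. For every $G \in \LL_n$ with $\gdcut(G,W) \le \gd$, \refL{lem:C} yields a graph $\tG \in \LL_n$ isomorphic to $G$ and satisfying $\dcut(\tG,W) \le \gd + \eps_n$. Choose one such $\tG$ for each $G$ and let $\Phi$ be the resulting map from $\{G \in \LL_n : \gdcut(G,W) \le \gd\}$ into $\{G' \in \LL_n : \dcut(G',W) \le \gd + \eps_n\}$; by definition the two sets have cardinalities $N_\square(n,\gd;W)$ and $\widehat{N}_\square(n,\gd+\eps_n;W)$, respectively.

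For the counting step, if $\Phi(G) = G'$ then $G$ is isomorphic to $G'$ by construction, so the fibre $\Phi^{-1}(G')$ is contained in the set of labelled graphs in $\LL_n$ isomorphic to $G'$. The latter set has cardinality $n!/|\mathrm{Aut}(G')| \le n!$, so $\Phi$ is at most $n!$-to-one, giving
\begin{equation*}
N_\square(n,\gd;W) \;=\; \sum_{G'} |\Phi^{-1}(G')| \;\le\; n!\, \widehat{N}_\square(n,\gd+\eps_n;W),
\end{equation*}
as required.

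It remains to verify $\eps_n \to 0$ as \ntoo. The term $18/\sqrt{\logb n}$ obviously vanishes. For the second, step functions constant on the cells of the $\cPP_n$-grid are dense in $L^1([0,1]^2)$ as \ntoo{} (approximate $W$ first by a continuous function and then invoke uniform continuity), and $\bW_n = \E[W\mid \BB_{\cPP_n}]$ is a contractive $L^1$-approximant of $W$ in this class; hence $\|\bW_n-W\|_1 \to 0$, and therefore $\dcut(\bW_n,W) \le \|\bW_n-W\|_1 \to 0$.

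No serious obstacle arises: \refL{lem:C} does all the heavy lifting, and the remaining ingredients -- bounding isomorphism class sizes and $L^1$-convergence of conditional expectations on refining partitions -- are entirely routine. The only mild subtlety is that the partitions $\cPP_n$ are not nested in $n$, so one cannot appeal to the martingale convergence theorem directly, but the $L^1$-density argument just sketched bypasses this.
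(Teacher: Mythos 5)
Your argument is correct and follows the paper's own proof essentially verbatim: apply Lemma~\ref{lem:C} to obtain the relabelled $\tG$, observe that the induced map is at most $n!$-to-one because $\tG$ is isomorphic to $G$, and conclude by noting $\dcut(\bW_n,W)\le\|\bW_n-W\|_{L^1}\to0$. The only difference is cosmetic: you spell out the standard $L^1$-approximation argument (continuous approximation plus contractivity of conditional expectation, avoiding martingale convergence since the $\cPP_n$ are not nested) where the paper simply calls it a well-known fact.
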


\begin{proof}
  By \refL{lem:C}, if $G\in\LL_n$ and $\gdcut(G,W)\le\gd$, then
$\dcut(\tG,W)\le\gd+\eps_n$ for some relabelling $\tG$ of $G$. There are at
  most $\widehat{N}_\square(n,\delta+\eps_n;W)$ such graphs $\tG$ by \eqref{hN},
and each corresponds to at most $n!$ graphs $G$.
Finally, note the well-known fact that $\dcut(\bW_n,W)\le
\norm{\bW_n-W}_{L^1}\to0$ as \ntoo. 
%by Lebesgue's differentiation theorem (or by simple martingale theory).
\end{proof}

\begin{remark}
  The bound \eqref{lemc} in \refL{lem:C} is not valid without the term
  $\dcut(W,\bW_n)$. For a simple example, let $n$ be even and let
$G$ be a balanced complete  bipartite graph. Further, let 
$W\=W_{G}(\frax{nx},\frax{ny})$, where $\frax{x}$ denotes the fractional
  part. (Thus $W$ is 
obtained by partitioning $\oi^2$ into $n^2$ squares and putting a
  copy of $W_{G}$ in each. Furthermore, 
$W=W_{G'}$ for a blow-up $G'$ of $G$ with $n^2$ vertices.)
Then $W$ is equivalent to $W_{G}$, so $\gdcut(G,W)=0$. Furthermore,
$\bW_n=1/2$ (the edge density), and it is easily seen that 
for any relabelling $\tG$ of $G$,
$\dcut(\tG,W)\ge\dcut(\tG,\bW_n)\ge \frac{1}8$. Hence the left-hand side of
\eqref{lemc} does not tend to 0 as \ntoo; thus the term $\dcut(W,\bW_n)$ 
is needed. 
\end{remark}

After these preliminaries, we turn to estimating 
$\widehat{N}_\square(n,\delta;W)$ and ${N}_\square(n,\delta;W)$ 
using $\Ent(W)$. 

\begin{lemma}
\label{lem:ball_w}
For every graphon $W$  and for every $\delta>0$,
$$\liminf_{n \to \infty} \frac{\logb \widehat{N}_\square(n,\delta;W)}{\binom{n}{2}}  \ge
\Ent(W).$$
\end{lemma}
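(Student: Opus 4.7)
The plan is to use the random graph $G(\bW_n)$ on vertex set $[n]$, where each pair $ij$ is independently an edge with probability $\bW_n(i/n,j/n)$ (exactly as in the proof of \refL{lem:C}); denote its distribution on $\LL_n$ by $\mu_n$. The first task is to show concentration in the cut norm: by Lemma~10.11 of \cite{LovaszBook}, $d_\square(W_{G(\bW_n)}, \bW_n) \le 10/\sqrt n$ with probability at least $1-e^{-n}$, and $d_\square(\bW_n, W) \le \norm{\bW_n - W}_{L^1} \to 0$ as \ntoo{} (as already noted in the proof of \refL{L:Cx}), so the triangle inequality yields $\Pr[d_\square(W_{G(\bW_n)}, W) \le \delta] \to 1$.

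Next, I would estimate $\HH(\mu_n)$ from below. Since the edges of $G(\bW_n)$ are mutually independent Bernoulli variables, $\HH(\mu_n) = \sum_{1 \le i < j \le n} h(\bW_n(i/n, j/n))$. Jensen's inequality applied to the concave function $h$ gives $h(\bW_n(i/n, j/n)) \ge n^2 \int_{I_i \times I_j} h(W)$ on each off-diagonal cell $I_i \times I_j$, and summing (the $n$ diagonal cells have total area $1/n$ and contribute at most $O(n)$) yields
\begin{equation*}
\HH(\mu_n) \ge \frac{n^2}{2}\Ent(W) - O(n) = (1 - o(1)) \binom{n}{2} \Ent(W).
\end{equation*}
Now apply the standard entropy-counting bound: letting $A_n = \{G \in \LL_n : d_\square(W_G, W) \le \delta\}$ and $p_n = \Pr[G(\bW_n) \in A_n] \to 1$, conditioning on $\mathbf{1}_{A_n}$ and bounding the conditional entropies by the logarithms of the corresponding supports gives $\HH(\mu_n) \le 1 + p_n \logb |A_n| + (1 - p_n) \logb|\LL_n| \le 1 + \logb \widehat{N}_\square(n, \delta; W) + (1-p_n)\binom{n}{2}$. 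Rearranging and dividing by $\binom{n}{2}$ produces the claimed $\liminf \ge \Ent(W)$.

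The main technical point is that random graph constructions generally concentrate in the pseudometric $\gdcut$ rather than in the stricter $\dcut$ used in the definition of $\widehat{N}_\square$. This is circumvented by sampling at the \emph{deterministic} positions $X_i = i/n$ (equivalently, sampling from $\bW_n$ rather than from $W$ directly), so that the vertex labels $1, \ldots, n$ are automatically aligned with the natural ordering of $[0,1]$, up to the $O(1/\sqrt n)$ sampling error controlled by Lovász's concentration lemma and the $L^1$ error $\norm{\bW_n - W}_{L^1}$ from averaging.
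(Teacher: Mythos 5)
Your proof is correct, and it takes a genuinely different route from the paper. The paper works with the random graph $G(n,W)$ (with \iid{} uniform vertex positions $X_i$), invokes the asymptotic entropy formula $\HH(G(n,W))/\binom n2 \to \Ent(W)$ from \cite{Aldous} and \cite[Appendix~D]{SJ249} as a black box, and then faces the issue you flag in your last paragraph: $G(n,W)$ concentrates only in the pseudometric $\gdcut$, so the entropy-counting argument a priori bounds $N_\square$ rather than $\hN_\square$; the paper then applies \refL{L:Cx} to convert from $N_\square(n,\delta;W)$ to $\hN_\square(n,\delta+\eps_n;W)$ at the cost of a shrinking additive error $\eps_n$, finally absorbed by replacing $\delta$ with $\delta/2$. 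Your approach sidesteps both of these appeals by sampling from the step graphon $\bW_n$ at the fixed positions $i/n$: since the edges of $G(\bW_n)$ are mutually independent Bernoulli variables, you can compute $\HH(G(\bW_n))=\sum_{i<j}h\bigpar{\bW_n(i/n,j/n)}$ exactly and then lower-bound it by $\binom n2 \Ent(W)-O(n)$ via Jensen's inequality applied cell-by-cell, so the asymptotic entropy formula \eqref{sjw} is not needed; and because the vertex labels are already aligned with the dyadic grid on $\oi$, \cite[Lemma~10.11]{LovaszBook} together with $\norm{\bW_n-W}_{L^1}\to0$ gives concentration directly in $\dcut$, so \refL{L:Cx} is not needed either. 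What the paper's approach buys is reuse: \eqref{sjw} is already required in the proof of \refT{thm:T4}, and \refL{L:Cx} is required again in \refL{lem:bound_N}, so the paper's argument is shorter given those lemmas. What your approach buys is self-containedness and a cleaner direct derivation of the lower bound on $\hN_\square$ without an auxiliary conversion step. Both are valid; yours is arguably the more transparent proof of this particular lemma.
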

\begin{proof}
Consider the random graph $G(n,W)\in\LL_n$. 
As shown in \cite{MR2274085}, $G(n,W)\to W$ almost surely, and thus in
probability; in other words,
the probabilities $p_n := \Pr[\delta_{\square}(G(n,W),W) \le \delta]$
converge to $1$ as \ntoo.
Moreover it is shown in~\cite{Aldous} and \cite[Appendix D]{SJ249} that 
\begin{equation}
  \label{sjw}
\lim_{n \to \infty }\frac{\HH(G(n,W))}{\binom{n}{2}}  = \Ent(W), 
\end{equation} 
where $\HH(\cdot)$ denotes the usual entropy of a (discrete) random variable.

Let $I_n := \etta_{[\delta_\square(G(n,W),W) \le \delta]}$ so
that
$\Ex[I_n]=p_n$. We have, by simple standard results on entropy,
\begin{equation*}
  \begin{split}
 \HH(G&(n,W)) = \Ex [\HH(G(n,W)\mid I_n)] + \HH(I_n) \\
&= p_n \HH(G(n,W)\mid I_n=1) + (1-p_n ) \HH(G(n,W)\mid I_n=0) +h(p_n) \\
& \le  p_n \logb N_\square(n,\delta; W) + (1-p_n) \binom{n}{2} + h(p_n) \\
& \le  \logb N_\square(n,\delta;W) + (1-p_n) \binom{n}{2} + 1
\\&
=
  \logb N_\square(n,\delta;W) + o\bigpar{n^2}.
  \end{split}
\end{equation*}
By \refL{L:Cx}, this yields 
\begin{equation*}
  \begin{split}
 \HH(G&(n,W))
\le
  \logb \hN_\square(n,\delta+\eps_n;W) + o\bigpar{n^2}
  \end{split}
\end{equation*}
for some sequence $\eps_n\to0$.
The result follows now from \eqref{sjw}, 
if we replace $\gd$ by $\gd/2$.
\end{proof}

We define, for convenience, for $x\ge0$,
\begin{equation}
  \hx(x)\=h\bigpar{\min(x,\tfrac12)};
\end{equation}
thus $\hx(x)=h(x)$ for $0\le x\le \frac12$, and $\hx(x)=1$ for $x>\frac12$.
Note that $\hx$ is non-decreasing.

\begin{lemma}
\label{lem:boundHatN}
Let $W$ be a graphon, $n \ge k \ge 1$ be integers and  $\delta>0$.
For any equipartition $\PP$ of $[n]$  
into $k$ sets, we have 
$$
\frac{\logb \widehat{N}_\square(n,\delta;W)}{n^2}  \le \frac{1}{2}
\Ent(\Ex[W\mid\BB_\PP]) + \frac{1}{2} \hx(4k^2 \delta) + 2 k^2 \frac{\logb
  n}{n^2}.
$$ 
\end{lemma}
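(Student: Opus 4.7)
The plan is to bound $\widehat{N}_\square(n,\delta;W)$ by grouping the graphs $G\in\LL_n$ with $d_\square(W_G,W)\le\delta$ according to their block density profile with respect to $\PP$, following the regularity-style counting strategy used in proofs of growth rates of hereditary classes. First I would dispose of the small-$n$ case: if $n<2k$, the trivial bound $\widehat{N}_\square(n,\delta;W)\le 2^{\binom{n}{2}}$ combined with $2k^2\logb n/n^2\ge 1/2$ already implies the inequality, so one may assume $n\ge 2k$. Writing $V_1,\dots,V_k$ for the parts of $\PP$ and $I_1,\dots,I_k\subset\oi$ for the corresponding intervals of lengths $|V_i|/n$, this gives $|I_i|\ge 1/(2k)$.

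For $G\in\LL_n$ let $e_{ij}(G)$ denote the number of edges of $G$ between $V_i$ and $V_j$ when $i\ne j$, and within $V_i$ when $i=j$. Then $\Ex[W_G\mid\BB_\PP]$ is the step graphon equal to $d_{ij}(G)\=e_{ij}(G)/(|V_i||V_j|)$ on $I_i\times I_j$ for $i\ne j$ and to $d_{ii}(G)\=2e_{ii}(G)/|V_i|^2$ on $I_i\times I_i$. Testing $\|W_G-W\|_\square\le\delta$ against $f=\etta_{I_i}$, $g=\etta_{I_j}$ gives $\bigabs{\iint_{I_i\times I_j}(W_G-W)}\le\delta$, and dividing by $|I_i||I_j|\ge 1/(4k^2)$ yields $|d_{ij}(G)-\bar W_{ij}|\le 4k^2\delta$, where $\bar W_{ij}$ denotes the average of $W$ on $I_i\times I_j$.

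The core step is to count graphs with a prescribed profile $(e_{ij})_{i\le j}$. Their number equals $\prod_{i<j}\binom{|V_i||V_j|}{e_{ij}}\prod_i\binom{\binom{|V_i|}{2}}{e_{ii}}$. \refL{lem:entropyBinomial} bounds each off-diagonal factor by $2^{|V_i||V_j|h(d_{ij})}$. For the diagonal factors I would use the elementary fact that the map $N\mapsto Nh(m/N)$ is non-decreasing in the real variable $N\ge m$ (its derivative in $N$ equals $-\logb(1-m/N)\ge 0$), which upgrades \refL{lem:entropyBinomial} to the estimate $\binom{\binom{|V_i|}{2}}{e_{ii}}\le 2^{(|V_i|^2/2)h(d_{ii})}$. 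Summing exponents identifies exactly
\[
\sum_{i<j}|V_i||V_j|h(d_{ij})+\sum_i(|V_i|^2/2)h(d_{ii})=(n^2/2)\Ent(\Ex[W_G\mid\BB_\PP]),
\]
so each profile class contains at most $2^{(n^2/2)\Ent(\Ex[W_G\mid\BB_\PP])}$ graphs.

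To conclude, \eqref{eq:h_basic} combined with $\hx\ge h$ (which painlessly absorbs the case $|d_{ij}(G)-\bar W_{ij}|>1/2$) gives $h(d_{ij}(G))\le h(\bar W_{ij})+\hx(4k^2\delta)$; a weighted average with the weights $|I_i||I_j|$ (summing to $1$) yields $\Ent(\Ex[W_G\mid\BB_\PP])\le\Ent(\Ex[W\mid\BB_\PP])+\hx(4k^2\delta)$. The number of density profiles is at most $\prod_{i\le j}(|V_i||V_j|+1)\le n^{k(k+1)}\le n^{2k^2}$, since $|V_i||V_j|+1\le n^2$ in the regime $n\ge 2k$; this contributes at most $2k^2\logb n$ to $\logb\widehat{N}_\square$. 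Combining these three pieces produces the claimed inequality. The only subtle point is the treatment of the diagonal blocks, where the monotonicity of $N\mapsto Nh(m/N)$ is exactly what is needed to collapse the exponent into $(n^2/2)\Ent(\Ex[W_G\mid\BB_\PP])$ with no leftover boundary term.
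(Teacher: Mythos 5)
Your proof is correct and follows essentially the same route as the paper's: group the admissible graphs by their edge-density profile with respect to $\PP$, bound each profile class via \refL{lem:entropyBinomial}, handle the diagonal blocks by the monotonicity of $N\mapsto Nh(m/N)$ (equivalently, the paper's observation that $h(x)/x$ is decreasing, since $Nh(m/N)=m\cdot h(m/N)/(m/N)$), transfer from the profile of $W_G$ to that of $W$ using \eqref{eq:h_basic}, and multiply by the $\le n^{2k^2}$ profile choices. The repackaging through the step graphon $\Ex[W_G\mid\BB_\PP]$ and the preliminary $n<2k$ case split are cosmetic; in fact the latter is unnecessary, as $\lfloor n/k\rfloor\ge n/(2k)$ already holds whenever $n\ge k$.
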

\begin{proof}
Denote the sets in $\PP$ by  $V_1,\ldots,V_k \subseteq [n]$ and their sizes
by $n_1,\ldots,n_k$, and let $I_1,\ldots,I_k$ be the subsets in  
the corresponding partition of $[0,1]$. 

Let $w_{ij}$ denote the value of
$\Ex[W\mid \BB_\PP]$ on $I_i \times I_j$. Suppose that $G\in\LL_n$ 
and $\|W_G -
W\|_\square \le \delta$. Let $e(V_i,V_j)$ be the number of edges in $G$
from $V_i$ to
$V_j$ when $i\neq j$, and twice the number of edges with both endpoints in
$V_i$ when $i=j$. Then
$$e(V_i,V_j) = n^2 \int_{I_i \times I_j} W_G(x,y) \dd x \dd y,$$
and thus
$$\bigabs{ e(V_i,V_j) - w_{ij} n_i n_j} 
= n^2 \left| \int_{I_i \times I_j}
(W_G(x,y)-W(x,y)) \dd x \dd y \right| \le \delta n^2.$$
Hence
\begin{equation}
\label{eq:edgeDeviation}
\left|\frac{e(V_i,V_j)}{n_in_j} -w_{ij} \right| \le \frac{\delta n^2}{n_in_j}
\le \delta \left(\frac{n}{\lfloor n/k \rfloor} \right)^2 \le 4 k^2 \delta.
\end{equation}
Fix numbers $e(V_i,V_j)$ satisfying (\ref{eq:edgeDeviation}), and let $N_1$ be
the number of graphs on $[n]$ with these $e(V_i,V_j)$.
By Lemma~\ref{lem:entropyBinomial}, for $i \neq j$, the edges in $G$ between
$V_i$ and $V_j$ can be chosen in
\begin{equation}
\label{eq:graphs_num_offdiag}
\binom{n_in_j}{e(V_i,V_j)} \le 2^{n_in_j h(e(V_i,V_j)/n_in_j)}
\end{equation}
number of ways. For $i=j$, the edges in $V_i$
may be chosen in
\begin{equation}
\label{eq:graphs_num_diag}
\binom{\binom{n_i}{2}}{\frac{1}{2} e(V_i,V_i)} 
\le 2^{\binom{n_i}{2} 
h(\frac{1}{2}e(V_i,V_i)/\binom{n_i}{2})} 
\le
2^{\frac{1}{2}n_i^2 h(e(V_i,V_i)/n_i^{2})}
\end{equation}
number of ways, where the second inequality holds because $h$ is concave 
with $h(0)=0$ and thus $h(x)/x$ is decreasing.

Consequently, by (\ref{eq:graphs_num_offdiag}) and
(\ref{eq:graphs_num_diag}),
\begin{equation*}
  \begin{split}
\logb N_1 
&\le \sum_{i < j} n_in_j h\bigpar{e(V_i,V_j)/n_in_j} + \frac{1}{2} \sum_{i}
n_i^2 h\bigpar{e(V_i,V_i)/n_i^2}
\\&
= \frac{1}{2} \sum_{i,j=1}^k n_in_j h\bigpar{e(V_i,V_j)/n_in_j}.	
  \end{split}
\end{equation*}
Using (\ref{eq:edgeDeviation}) and (\ref{eq:h_basic}), we obtain
$$
\logb N_1  \le \frac{1}{2} \sum_{i,j=1}^k n_in_j 
\bigpar{h(w_{ij})+\hx(4k^2 \delta)},
$$
and thus
\begin{equation*}
  \begin{split}
n^{-2} \logb N_1 
&\le \frac{1}{2} \sum_{i,j} |I_i||I_j| \bigpar{h(w_{ij})+\hx(4k^2 \delta)}
\\&
= \frac{1}{2} \Ent(\Ex[W\mid\BB_\PP]) + \frac{1}{2} \hx(4k^2 \delta).	
  \end{split}
\end{equation*}

Each $e(V_i,V_j)$ may be chosen in at most $n^2$ ways, and thus the total number
of choices is at most $n^{2k^2}$, and we obtain
$\widehat{N}_\square(n,\delta;W) \le n^{2k^2} \max N_1.$ 
Consequently, 
\begin{equation*}
n^{-2} \logb \widehat{N}_\square(n,\delta;W) \le \frac{1}{2}
\Ent(\Ex[W\mid\BB_\PP]) + \frac{1}{2} \hx(4k^2 \delta) 
+ 2k^2 \frac{\logb n}{n^2}.
\qedhere  
\end{equation*}
\end{proof}

\begin{lemma}\label{lem:bound_N}
Let $W$ be a graphon. Then for any $k \ge 1$, $\gd>0$ and any equipartition
$\mathcal{P}$ of $[0,1]$ into $k$ sets,
$$\limsup_{n \to \infty} \frac{\logb N_\square(n,\delta;W)}{\binom{n}{2}} \le
\Ent(\Ex[W\mid\BB_\PP]) + \hx(4 k^2 \delta).$$
Consequently
$$\lim_{\delta \to 0} \limsup_{n \to \infty} 
\frac{\logb N_\square(n,\delta;W)}{\binom{n}{2}} 
\le \Ent(\Ex[W\mid\BB_\mathcal{P}]).$$
\end{lemma}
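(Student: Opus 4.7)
The plan is to chain Lemmas~\ref{L:Cx} and~\ref{lem:boundHatN}: the first yields
$N_\square(n,\delta;W)\le n!\,\widehat N_\square(n,\delta+\eps_n;W)$ with $\eps_n\to0$, while the second bounds $\widehat N_\square$ in terms of the entropy of a conditional expectation of $W$. A preliminary reduction is to assume $\PP=\cPP_k$. This is legitimate because $N_\square(n,\delta;W)$ depends on $W$ only through its equivalence class (being defined via $\gdcut$), and because for any measure-preserving bijection $\sigma:[0,1]\to[0,1]$ the map $(W,\PP)\mapsto(W\circ(\sigma\times\sigma),\sigma^{-1}(\PP))$ leaves $\Ent(\Ex[W\mid\BB_\PP])$ unchanged by the change of variables in~\eqref{entw}; any two equipartitions of $[0,1]$ into $k$ parts are related by such a $\sigma$.

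Next, for each $n\ge k$ I would pick the equipartition $\PP'_n$ of $[n]$ into $k$ consecutive blocks of sizes in $\{\lfloor n/k\rfloor,\lceil n/k\rceil\}$. Its associated partition $\PP''_n$ of $[0,1]$ is then itself an interval partition whose endpoints converge to $j/k$ ($0\le j\le k$) at rate $O(1/n)$. Combining Lemma~\ref{L:Cx} with Lemma~\ref{lem:boundHatN} applied to $\PP'_n$ yields
\begin{equation*}
\frac{\logb N_\square(n,\delta;W)}{n^2}
\le
\frac{\logb n!}{n^2}
+\tfrac12\Ent\bigpar{\Ex[W\mid\BB_{\PP''_n}]}
+\tfrac12\hx\bigpar{4k^2(\delta+\eps_n)}
+\frac{2k^2\logb n}{n^2}.
\end{equation*}

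Taking $\limsup_{n\to\infty}$ eliminates the $\logb n!/n^2$ and $\logb n/n^2$ terms, and continuity of $\hx$ together with $\eps_n\to0$ replaces the $\hx$ term by $\tfrac12\hx(4k^2\delta)$. The crux is the convergence $\Ent(\Ex[W\mid\BB_{\PP''_n}])\to\Ent(\Ex[W\mid\BB_{\cPP_k}])$: for each $(x,y)$ off the grid $\{j/k:0\le j\le k\}^2$, the rectangle of $\PP''_n$ containing $(x,y)$ converges in measure to the corresponding rectangle of $\cPP_k$, so $\Ex[W\mid\BB_{\PP''_n}](x,y)\to\Ex[W\mid\BB_{\cPP_k}](x,y)$ almost everywhere, and bounded convergence ($0\le h\le1$) delivers the entropy convergence. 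Multiplying by $n^2/\binom n2\to2$ yields the first stated inequality; letting $\delta\to0$ (using continuity of $\hx$ at~$0$ with $\hx(0)=0$) gives the ``consequently'' statement. The only mildly delicate step is this final entropy convergence under discretisation of the equipartition; everything else is routine bookkeeping.
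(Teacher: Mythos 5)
Your proposal is correct and matches the paper's proof essentially step for step: reduce to $\PP=\cPP_k$ by a measure-preserving rearrangement, discretise to an equipartition of $[n]$, chain Lemma~\ref{L:Cx} with Lemma~\ref{lem:boundHatN}, and pass to the limit using a.e.\ and bounded convergence for the entropy of the conditional expectation. The only difference is that you supply explicit justifications (for the rearrangement reduction and the a.e.\ convergence of the conditional expectations) that the paper leaves implicit.
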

\begin{proof}
By a suitable measure preserving re-arrangement $\sigma:[0,1] \to[0,1]$,
we may assume that $\cP$ is the partition $\cPP_k$ 
into $k$  intervals $((j-1)/k,j/k]$ of length $1/k$. 

For every $n>1$, let $\PP_n$ be the corresponding
equipartition of $[n]$ into $k$ sets $P_{n1},\dots,P_{nk}$ 
where 
$P_{nj}\=\set{i:\floor{(j-1)n/k}<i\le \floor{jn/k}}$.
Note that $\Ex[W\mid \BB_{\PP_n}]$ converges
to $\Ex[W\mid\BB_\PP]$ almost everywhere as \ntoo, and hence
$$
\lim_{n \to \infty} \Ent(\Ex[W\mid \BB_{\PP_n}]) 
= \Ent(\Ex[W\mid \BB_{\PP}]).
$$
Then by Lemmas \refand{L:Cx}{lem:boundHatN}, we have, with $\eps_n\to0$,
\begin{multline*}
 \frac{\logb N_\square(n,\delta;W)}{n^2} 
\le
\frac{\logb (n!)}{n^2}+\frac12 \Ent(\Ex[W\mid \BB_{\PP_{n}}]) 
\\
+ \frac12 \hx\left(4k^2 \left(\delta + \eps_n\right)\right) 
+ 2 k^2 \frac{\logb n}{n^2}   
\end{multline*}
and the result follows by letting \ntoo.
\end{proof}

We can now show our main lemma.
As usual, if $A$ is a set of graph limits, we define
$ \delta_\square(G,A) \=\inf_{W \in A}\delta_\square(G,W) $.

\begin{lemma}
\label{lem:T1}
Let $A\subseteq\cuoo$ be a closed set of graph limits and let
$$
N_\square(n,\delta;A) := 
\left| \left\{G \in \mathcal{L}_n: 
\delta_\square(G,A) \le \delta  \right\} \right|.$$
Then
\begin{equation}
\label{eq:thm_T1}
\lim_{\delta \to 0} 
\liminf_{n \to \infty} \frac{\logb N_\square(n,\delta;A) }{\binom{n}{2}}
= \lim_{\delta \to 0} \limsup_{n \to \infty} 
\frac{\logb N_\square(n,\delta;A)}{\binom{n}{2}}
= \max_{W \in A} \Ent(W).
\end{equation}
\end{lemma}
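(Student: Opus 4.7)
The plan is to treat the two inequalities in \eqref{eq:thm_T1} separately, using the single-graphon estimates of \refL{lem:ball_w} and \refL{lem:bound_N} together with the compactness of $A$. Since $A$ is closed in the compact space $\cuoo$, it is itself compact; by \refL{lem:EntSemicont}(ii) the maximum $M \= \max_{W \in A} \Ent(W)$ is attained, say at some $W^* \in A$.

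For the lower bound I would use the chain of inclusions
$$
\widehat{N}_\square(n,\delta;W^*) \le N_\square(n,\delta;W^*) \le N_\square(n,\delta;A),
$$
the first from \eqref{hNN} and the second because $W^* \in A$. Applying \refL{lem:ball_w} to $W^*$ yields $\liminf_n \logb N_\square(n,\delta;A)/\binom{n}{2} \ge \Ent(W^*) = M$ for every $\delta > 0$, and hence also after $\lim_{\delta\to0}$.

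For the upper bound, fix $\eps > 0$. For each $W \in A$, parts (i) and (ii) of \refL{lem:EntSemicont} together give $\Ent(\bW_k) \to \Ent(W) \le M$ as \ktoo, since $\bW_k \to W$ in $L^1$ and hence in cut norm. Choose $k_W$ with $\Ent(\bW_{k_W}) \le M + \eps/2$, and then $\delta_W > 0$ small enough that $\hx(4 k_W^2 \delta_W) \le \eps/2$; applying \refL{lem:bound_N} with $k = k_W$ and $\PP = \cPP_{k_W}$ gives
$$
\limsup_{n\to\infty} \frac{\logb N_\square(n,\delta_W;W)}{\binom{n}{2}} \le M + \eps.
$$
The open balls $\{B(W, \delta_W/2) : W \in A\}$ (in the cut-distance topology) cover $A$, so compactness extracts a finite subcover with centres $W_1,\dots,W_N$ and radii $\delta_i \= \delta_{W_i}$. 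For $\delta < \tfrac12 \min_i \delta_i$, the triangle inequality forces any $G$ with $\delta_\square(G,A) \le \delta$ to satisfy $\delta_\square(G,W_i) \le \delta_i$ for some $i$, whence
$$
N_\square(n,\delta;A) \le \sum_{i=1}^N N_\square(n,\delta_i;W_i).
$$
Since $N$ does not depend on $n$, this yields $\limsup_n \logb N_\square(n,\delta;A)/\binom{n}{2} \le M + \eps$; letting $\delta\to 0$ and then $\eps \to 0$ completes the upper bound.

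The main obstacle is that the error term $\hx(4k^2\delta)$ in \refL{lem:bound_N} couples $\delta$ with a parameter $k$ whose admissible value depends on the individual $W$, so no uniform bound over $W \in A$ is available directly. Compactness of $A$ is precisely what lets us replace a supremum over the (possibly uncountable) set $A$ by a maximum over a finite subcover, and the triangle inequality then transfers the per-$W$ estimates to a uniform $\delta$-neighbourhood of $A$.
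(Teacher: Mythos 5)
Your proof is correct, but the argument for the upper bound takes a genuinely different route from the paper's, and is arguably cleaner. The paper also reduces to a finite set of graphons by compactness, but it first fixes a net radius $\delta$ and a partition parameter $k$ \emph{independently}, uses the weak regularity lemma \eqref{weakRegGraphon} to get partitions $\PP_i$ of size $k$ with $\|W_i - \E[W_i\mid\BB_{\PP_i}]\|_\square$ small, and then --- because the net radius and the partition size were decoupled --- has to reconcile the two via a diagonal argument: take $\delta = 2^{-k}$, let $k\to\infty$, extract a convergent subsequence $W'_k \to W' \in A$, observe that the step graphons $W''_k$ also converge to $W'$, and invoke lower semicontinuity to bound $\limsup_k \Ent(W''_k) \le \Ent(W')$. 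You avoid this two-stage limiting procedure by tailoring the radius $\delta_W$ to each centre $W$: you first pick $k_W$ using the fact that $\Ent(\bW_k) \to \Ent(W)$ (which follows from \refL{lem:EntSemicont}(i) giving $\Ent(\bW_k) \ge \Ent(W)$, part (ii) giving $\limsup_k \Ent(\bW_k) \le \Ent(W)$, and $\bW_k \to W$ a.e.\ hence in cut norm), and only then pick $\delta_W$ small enough that the error $\hx(4k_W^2\delta_W)$ is controlled. This makes the finite subcover immediately give a uniform bound $M+\eps$, with no need for the weak regularity lemma or a subsequence argument. The paper's version is self-contained in the sense that it relies only on semicontinuity rather than the full two-sided convergence $\Ent(\bW_k)\to\Ent(W)$, but since that convergence is already established in the proof of \refL{lem:EntSemicont}(ii), your shortcut costs nothing. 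One small point worth stating explicitly: when you pass from $\delta_\square(G,A)\le\delta$ to the existence of $W\in A$ with $\delta_\square(G,W)\le\delta$, you are using that $A$ is compact so the infimum is attained (or, alternatively, you could work with $\delta_\square(G,W)<\delta+\eps'$ and absorb $\eps'$); your choice $\delta < \tfrac12\min_i\delta_i$ then cleanly gives $\delta_\square(G,W_i) < \delta + \delta_i/2 < \delta_i$.
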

\begin{proof}
First note that the maximum in the right-hand side of (\ref{eq:thm_T1}) exists
as a consequence of the semicontinuity of $\Ent(\cdot)$ in
Lemma~\ref{lem:EntSemicont}~(ii) and the compactness of $A$.

Let $\delta>0$ and $k \ge 1$. Since $A$ is a compact subset of $\cuoo$, 
there exists a finite set of graphons $\{W_1,\ldots,W_m\} \subseteq A$ such
that  $\min_{i} \delta_\square(W,W_i) \le \delta$ for each $W \in A$. Hence
\begin{equation}
  \label{ba}
N_\square(n,\delta;A) \leq \sum_{i=1}^m N_\square(n,2\delta;W_i).
\end{equation}
By (\ref{weakRegGraphon}), for each $W_i$, we can choose an
equipartition 
$\PP_i$ of $[0,1]$ into at most 
$k$ sets such that 
\begin{equation}\label{bc}
\|W_i-\Ex[W_{i} |\BB_{\PP_i}]\|_\square \le \frac{4}{\sqrt{\logb k}}.  
\end{equation}
By \eqref{ba} and Lemma~\ref{lem:bound_N},
\begin{equation}\label{bb}
  \begin{split}
\limsup_{n \to \infty} \frac{\logb N_\square(n,\delta;A)}{\binom{n}{2}}  
&\le
\max_{i \le m}
\limsup_{n \to \infty} \frac{\logb N_\square(n,2 \delta; W_i)}{\binom{n}{2}} 
\\&
\le \max_{i \le m} \Ent(\Ex[W_{i}\mid \BB_{\PP_i}])+\hx(8k^2 \delta).	
  \end{split}
\end{equation}
For each $k \ge 1$, take $\delta=2^{-k}$ and let $i(k)$ denote the index
maximizing $\Ent(\Ex[W_{i}| \BB_{\PP_i}])$ in \eqref{bb};
further let $W'_k\=W_{i(k)}$ and $W_k''\=\Ex[W_{i(k)}| \BB_{\PP_{i(k)}}]$.
Thus $W'_k \in A$, and by \eqref{bc}--\eqref{bb},
\begin{equation}
\label{eq:1_thm_T1}
\|W'_k-W''_k\|_\square \le \frac{4}{\sqrt{ \logb k}}
\end{equation}
and
\begin{equation}
\label{eq:2_thm_T1}
\limsup_{n \to \infty} \frac{\logb N_\square(n,2^{-k};A)}{\binom{n}{2}}  
\le \Ent(W_k'')
+\hx(8 k^2 2^{-k}).
\end{equation}
Since $A$ is compact, we can select a subsequence such that $W'_k$ converges,
and then $W'_k \to W'$ for some $W' \in A$. By (\ref{eq:1_thm_T1}),
also $W''_k \to W'$ in $\mathcal{W}_0$ and thus Lemma~\ref{lem:EntSemicont}
shows 
that
\begin{equation}
  \label{jane}
\limsup_{k \to \infty} \Ent(W''_k) \le \Ent(W').
\end{equation}
Since $N_\square(n,\delta;A)$ is an increasing function of $\delta$, letting  $k
\to \infty$, it follows from (\ref{eq:2_thm_T1}) and \eqref{jane} that
\begin{equation*}
  \begin{split}
\lim_{\delta \to 0} \limsup_{n \to \infty} 
\frac{ \logb N_\square(n,\delta;A)}{\binom{n}{2}}  
&=
\lim_{k \to \infty} \limsup_{n \to \infty} 
\frac{ \logb N_\square(n,2^{-k};A)}{\binom{n}{2}}  
\\&
\le \Ent(W') \le \max_{W \in A} \Ent(W),
  \end{split}
\end{equation*}
which shows that the \rhs{} in \eqref{eq:thm_T1} is an  upper bound.

To see that the \rhs{} in \eqref{eq:thm_T1} also is a lower bound,
note that \eqref{hNN} implies that for every $W\in A$,
\[
 N_\square(n,\gd;A)
\ge  N_\square(n,\gd;W)
\ge 
\widehat{N}_\square(n,\delta;W).
\]
The sought lower bound thus follows from 
Lemma~\ref{lem:ball_w},
which completes the proof.
\end{proof}

\section{Proofs of Theorems~\ref{thm:T2}--\ref{thm:TC}\label{sec:proofT2}}

\begin{proof}[Proof of Theorem~\ref{thm:T2}]
Let $\delta >0$. First observe that for sufficiently large $n$, if 
$G \in \QQ_n$, then $\delta_\square(G,\cqoo) < \delta$. Indeed, if not, then 
we could find a sequence $G_n$ with $\vv{G_n} \to \infty$ and
$\delta_\square(G,\cqoo) \ge \delta$. Then, by compactness,  $G_n$ would have a 
convergent subsequence, but the limit cannot be in $\cqoo$ which is a
contradiction. Consequently for  
sufficently large $n$, we have 
$|\QQ_n|\le |\ql_n|\le N_\square(n,\delta; \cqoo)$. Thus
\begin{equation*}
 \limsup_{n \to \infty} \frac{ \logb |\QQ_n|}{\binom{n}{2}}
\le
\limsup_{n \to \infty} \frac{\logb N_\square(n,\delta; \cqoo)}
 {\binom{n}{2}} .
\end{equation*}
The result now follows from the Lemma~\ref{lem:T1}.
\end{proof}

\begin{proof}[Proof of \refT{thm:T4}] 
Let $W$ be a graphon representing some $\gG \in \cqoo$ and consider
the random graph $G(n,W)\in\LL_n$. Since $\QQ$ 
is hereditary, it is easy to see that almost surely $G(n,W) \in \ql_n$,
see \refL{lem:hered} and \eqref{gnw} or \cite{SJ255}. Consequently,
letting $\HH(G(n,W))$ denote the entropy of the random graph $G(n,W)$ (as a
random variable in the finite set $\ql_n$),
$$\HH(G(n,W)) \le \logb |\ql_n|.$$
Hence, \eqref{sjw} and \eqref{n!} show that,
 for every $W \in \cqoo$,
$$
\liminf_{n \to \infty} \frac{ \logb |\QQ_n|}{\binom{n}{2}} 
=\liminf_{n \to \infty} \frac{ \logb |\ql_n|}{\binom{n}{2}} 
\ge \Ent(W). $$
The result now follows from Theorem~\ref{thm:T2}.
\end{proof}

\begin{proof}[Proof of \refT{thm:T3}]
(i). Let $\delta>0$ and let $B(\delta)=\{G: \delta_\square(G,\W_\QQ) <
  \delta\}$. The 
conclusion means that, for any $\gd$, 
$\Pr[G_n \in B(\delta)] \to 1$ as $n \to \infty$, \ie{}
$$\frac{|\QQ_n \cap B(\delta)|}{|\QQ_n|} \to 1.$$
If this is not true, then for some $c>0$ there are infinitely many $n$ with 
\begin{equation}
\label{eq:2_thm_T3}
|\QQ_n \setminus B(\delta)| \ge c |\QQ_n|.
\end{equation}
Consider the graph property
$\QQ^* := \QQ \setminus B(\delta)$. By (\ref{eq:2_thm_T3}) and the
assumption (\ref{eq:thm_T3})  
$$\limsup_{n \to \infty} \frac{\logb |\QQ_n^*|}{\binom{n}{2}} =\Ent(\W_\QQ).$$
Hence Theorem~\ref{thm:T2} shows that 
$\Ent(\W_\QQ) \le \max_{\W \in \widehat{\QQ^*}}
\Ent(\W)$.
So there exists $\W^* \in \widehat{\QQ^*}$ such that $\Ent(\W_\QQ) \le \Ent(\W^*)$.

On  the other hand,
$\overline{\QQ^*} \subseteq \overline{\QQ}$ and 
$\W_\QQ\not\in\overline{\QQ^*}$ so
$\widehat{\QQ^*}=\overline{\QQ^*}\cap\cuoo \subseteq
\widehat{\QQ}\setminus\set{\W_\QQ}$, 
but by assumption $\Ent(\W) < \Ent(\W_\QQ)$ for 
$\W \in \widehat{\QQ} \setminus \{\W_\QQ\}$. This yields a contradiction which
completes the proof of (i).

(ii). 
The labelled case follows in the same way, now using $\ql_n$ and
\eqref{n!}. 
\end{proof}

\begin{proof}[Proof of \refT{thm:TC}]
Let $\delta>0$ and let $\UB= \{G: \delta_\square(G, \supp(\mu))<\delta \}$. Then
$\UB$ is an open neighborhood of $\supp(\mu)$ in $\cuq$
and thus the assumption that $G_n$ converges in distribution to  $\mu$
implies that $\lim_{n \to \infty} \Pr[G_n \in \UB]=1$. We have,
similarly to the proof of \refL{lem:ball_w},
\begin{equation*}
  \begin{split}
\HH(G_n) &= \E[\HH(G_n\mid \etta_{[G_n \in \UB]})] + \HH(\etta_{[G_n \in \UB]}) 
\\ 
&= \Pr[G_n \in \UB] \HH(G_n\mid G_n \in \UB) + \Pr[G_n \not\in \UB] \HH(G_n| G_n
\not\in \UB) \\
&\hskip2em + \hx(\Pr[G_n \in \UB]) \\
&\le  \HH(G_n\mid G_n \in \UB) + \Pr[G_n \not\in \UB] \binom{n}{2}+1 \\
&\le
 \logb N_\square(n,\delta;\supp(\mu)) + \Pr[G_n \notin \UB] \binom{n}{2} +1.	
  \end{split}
\end{equation*}
Hence using $\lim_{n \to \infty} \Pr[G_n \not\in \UB]=0$,
$$ 
\limsup_{n \to \infty} \frac{\Ent(G_n)}{\binom{n}{2}} \le 
\limsup_\ntoo
\frac{\logb N_\square(n,\delta,\supp(\mu))}{\binom{n}{2}}.$$
The result follows from Lemma~\ref{lem:T1} by letting $\delta \to 0$.
\end{proof}

\section{Proof of Theorem~\ref{thm:TR}}

The stability version of Tur\'an's theorem, due to 
Erd{\H o}s and Simonovits \cite{MR0227049,MR0233735}, 
is equivalent to the following statement for graphons, see 
\cite[Lemma 23]{Pikhurko10} for a detailed proof and further explanations of
the connection. 
\begin{theorem}[\cite{Pikhurko10}]  %[\cite{MR0227049,MR0233735}]
\label{thm:ErdosSimonovits}
If a graphon $W$ is $K_{r+1}$-free, then $\iint W \le 1 -\frac{1}{r}$ with
equality if and only if $W$ is equivalent to the graphon $W_{K_r}$.
\end{theorem}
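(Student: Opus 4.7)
The plan is to deduce the inequality by a short sampling argument combined with Tur\'an's theorem, and to derive the equality characterization from the Erd\H{o}s--Simonovits stability theorem for graphs. The authors' remark that the graphon statement is equivalent to Erd\H{o}s--Simonovits stability is a clear signal that graph stability is the right (and essentially necessary) tool for the equality case.

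For the inequality, since $W$ is $K_{r+1}$-free we have $t(K_{r+1};W) = p(K_{r+1};W) = 0$. I would apply this to the random labelled graph $G(n,W)\in\LL_n$: the expected number of copies of $K_{r+1}$ in $G(n,W)$ equals $\binom{n}{r+1}\,t(K_{r+1};W) = 0$, so $G(n,W)$ is almost surely $K_{r+1}$-free. Tur\'an's theorem then bounds its edge count by $e(T_{n,r}) = (1-1/r)\binom{n}{2}\bigpar{1+O(1/n)}$ pointwise. Taking expectations and using $\E[e(G(n,W))] = \binom{n}{2}\iint W$ yields $\iint W \le 1 - 1/r + O(1/n)$, and letting $n\to\infty$ gives $\iint W \le 1-1/r$.

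For the equality case, assume $\iint W = 1 - 1/r$. Then $\E\bigpar{e(T_{n,r}) - e(G(n,W))} = O(n)$ and this random variable is non-negative, so Markov's inequality gives $e(G(n,W)) \ge (1 - 1/r - o(1))\binom{n}{2}$ with probability tending to one. Since $G(n,W)$ is almost surely $K_{r+1}$-free, the Erd\H{o}s--Simonovits stability theorem for graphs applies: for every $\epsilon > 0$, with probability tending to one, $G(n,W)$ can be transformed into the Tur\'an graph $T_{n,r}$ by modifying at most $\epsilon n^2$ edges, which implies $d_\square(W_{G(n,W)}, W_{T_{n,r}}) \to 0$ in probability. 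On the other hand, by \cite{BCLSV1} we have $G(n,W) \to W$ almost surely in cut distance, while $T_{n,r}\to W_{K_r}$, so the triangle inequality for $\delta_\square$ forces $\delta_\square(W, W_{K_r}) = 0$, i.e., $W$ is equivalent to $W_{K_r}$.

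The main obstacle is the equality case, which genuinely carries the weight of the Erd\H{o}s--Simonovits stability theorem: a purely graphon-native proof of the uniqueness of the extremal $W_{K_r}$ would, in effect, be a reproof of that classical result. The converse direction of the stated equivalence (graphon equality case implies graph stability) is by contrast soft and proceeds via the usual compactness argument: any hypothetical sequence of $K_{r+1}$-free graphs with edge density approaching $1-1/r$ but staying bounded away from $T_{n,r}$ in cut distance would, after passing to a convergent subsequence, yield a graph limit represented by a $K_{r+1}$-free graphon with $\iint = 1 - 1/r$ not equivalent to $W_{K_r}$, contradicting the equality statement.
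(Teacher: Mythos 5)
Your proof is correct. Note, however, that the paper does not give a proof of this statement at all: it is presented purely as a citation to Pikhurko \cite[Lemma~23]{Pikhurko10}, with the preceding remark that the graphon statement is \emph{equivalent} to the classical Erd\H{o}s--Simonovits stability theorem for graphs \cite{MR0227049,MR0233735}. Your argument supplies exactly the content of the cited equivalence. The sampling reduction (apply graph-level Tur\'an and stability to $G(n,W)$, which is a.s.\ $K_{r+1}$-free, then pass to the limit using $G(n,W)\to W$ a.s.\ and $T_{n,r}\to W_{K_r}$) is the standard and natural way to deduce the graphon statement from the graph one, and your final paragraph correctly sketches the converse reduction by compactness, which is what makes the word ``equivalent'' in the paper accurate.

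One small notational caution: stability for graphs says $G(n,W)$ can be turned into a copy of $T_{n,r}$ by changing $o(n^2)$ edges after a suitable relabelling of vertices, so the quantity that goes to $0$ is $\delta_\square(W_{G(n,W)}, W_{T_{n,r}})$ rather than $d_\square(W_{G(n,W)}, W_{T_{n,r}})$. This does not affect your conclusion, since the triangle inequality you invoke is for $\delta_\square$ and the final claim $\delta_\square(W,W_{K_r})=0$ follows all the same (the left side is deterministic, the right side is $o(1)$ in probability, so it must vanish). Everything else, including the observation that $t(K_{r+1};W)=p(K_{r+1};W)$ and hence that the expected number of $(r+1)$-cliques in $G(n,W)$ is exactly zero, is sound.
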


Recall the definition of randomness support and randomness support gra\-phon,
see \eqref{rand}.

\begin{lemma}
\label{lem:withclique}
Let $1\le r<\infty$.
If $\QQ$ is a hereditary graph class and $W \in \widehat{\QQ}$ has a
randomness support 
graphon that is not $K_r$-free, then 
there exists $s\in\set{0,1,\dots,r}$ such that $\wwrs\in\cqoo$.
In particular,
$\widehat{\QQ} \cap R_r \neq \emptyset$.
\end{lemma}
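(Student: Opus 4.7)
The plan is to construct a well-structured limit graphon $W^* \in \widehat{\cQ}$ whose off-diagonal blocks are constants in $(0,1)$ and whose diagonal blocks are arbitrary graphons, and then apply a Ramsey-for-graphons argument to each diagonal block to pick out the desired $s$.

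First I would pick $(x_1^*, \ldots, x_r^*)$ to be simultaneously a Lebesgue density point of the positive-measure set
\[
S := \{(x_1, \ldots, x_r) \in [0,1]^r : W(x_i, x_j) \in (0,1) \text{ for all } i \neq j\}
\]
(which has $|S| > 0$ since $\etta_{\rand(W)}$ is not $K_r$-free) and a Lebesgue point of $W$ at each pair $(x_i^*, x_j^*)$; such a point exists almost everywhere in $S$. Set $c_{ij} := W(x_i^*, x_j^*) \in (0,1)$. For small $\delta > 0$, let $B_i^\delta := [x_i^* - \delta, x_i^* + \delta]$ (disjoint) and $A^\delta := \bigcup_i B_i^\delta$; define $W^{(\delta)}$ on $[0,1]^2$ by rescaling $W|_{A^\delta \times A^\delta}$ through a piecewise-linear bijection that sends $I_i := ((i-1)/r, i/r]$ onto $B_i^\delta$. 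A change of variables gives $p(F; W^{(\delta)}) \le |A^\delta|^{-|V(F)|} p(F; W)$ for every graph $F$, so $p(F; W) = 0$ implies $p(F; W^{(\delta)}) = 0$, and \refL{lem:hered} yields $W^{(\delta)} \in \widehat{\cQ}$. Letting $\delta \to 0$ along a subsequence, compactness of $\cqq$ together with continuity of each $p(F; \cdot)$ and \refL{lem:hered} produces a limit $W^* \in \widehat{\cQ}$. By Lebesgue differentiation for rectangles applied at each $(x_i^*, x_j^*)$, the off-diagonal blocks $W^{(\delta_n)}|_{I_i \times I_j}$ converge in $L^1$ (hence in cut norm) to the constants $c_{ij}$; passing to a further subsequence makes each diagonal block $W^{(\delta_n)}|_{I_i \times I_i}$ converge to some graphon $V_i$. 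Thus $W^*$ is a block graphon with constant off-diagonal values $c_{ij} \in (0,1)$ and diagonal blocks $V_i$.

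For each $i$, the standard Ramsey-for-graphons observation shows that one of ``$p(K_k; V_i) > 0$ for all $k$'' or ``$p(\bar K_k; V_i) > 0$ for all $k$'' must hold, for otherwise $G(R(k_1,k_2), V_i)$ would almost surely contain neither an induced $K_{k_1}$ nor an induced $\bar K_{k_2}$, contradicting Ramsey's theorem. Set $t_i = 1$ in the first case and $t_i = 0$ in the second, let $s := \sum_i t_i$, and relabel the blocks so that $t_i = 1$ iff $i \le s$. For any $F \in \cC(r, s)$ with canonical partition $V(F) = U_1 \cup \cdots \cup U_r$, restricting each $x_u$ to $I_{i(u)}$ in the integral defining $p(F; W^*)$ yields (by the constancy of $W^*$ on off-diagonal blocks) a lower bound that factors as a product of nonzero off-diagonal pieces $c_{ij}^{e_{ij}}(1-c_{ij})^{\bar e_{ij}}$ and nonzero diagonal pieces equal to $p(K_{|U_i|}; V_i)$ for $i \le s$ and $p(\bar K_{|U_i|}; V_i)$ for $i > s$. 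Hence $p(F; W^*) > 0$, so \refL{lem:hered} gives $F \in \cQ$, yielding $\cC(r, s) \subseteq \cQ$; then \refR{Rcrs} produces $\wwrs \in \widehat{\cQ} \cap R_r$, establishing both conclusions. The main technical obstacle is ensuring that the off-diagonal blocks of the rescaled restrictions really do converge to constants in the limit, which requires the careful joint choice of $(x_i^*)$ as both a density point of $S$ and a Lebesgue point of $W$ at every pair $(x_i^*, x_j^*)$.
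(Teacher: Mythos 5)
Your proof is correct and follows essentially the same strategy as the paper's: zoom in near a Lebesgue-point $r$-tuple in the randomness support, rescale, pass to a subsequential limit to obtain a block graphon $W^*\in\cqoo$ with off-diagonal blocks constant, and then use Ramsey's theorem on the diagonal blocks to extract $\wwrs$. The only cosmetic differences are that the paper first replaces $W$ by the $\set{0,\tfrac12,1}$-valued graphon $W'$ (so the off-diagonal constant is exactly $\tfrac12$) whereas you work with $W$ directly and get constants $c_{ij}\in(0,1)$, and that the paper finishes by directly overwriting the diagonal blocks of $W^*$ by $0$ or $1$ and verifying membership in $\cqoo$ via \refL{lem:hered}, whereas you verify $\cC(r,s)\subseteq\cQ$ by computing $p(F;W^*)>0$ for $F\in\cC(r,s)$ and then invoke \refR{Rcrs} -- these are equivalent formulations of the same final step.
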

\begin{proof}
First define $W'(x,y) := W(x,y)$ if $W(x,y) \in \{0,1\}$ and
$W'(x,y):=\frac{1}{2}$ if $0<W(x,y)<1$. Then 
$W'(x,y)\in\set{0,\frac12,1}$ for all $(x,y)$. 
Moreover,
$W'$ has the same randomness
support 
as $W$ and it is easily seen that for any graph $F$, $p(F;W)=0$ if and only if
$p(F;W')=0$. It  follows from Lemma~\ref{lem:hered} that  $W' \in
\widehat{\QQ}$. 

Let $x_1,\ldots,x_r \in (0,1)$ be chosen at random, uniformly and independently.
By assumption, with positive probability,  we have $W'(x_i,x_j)=1/2$ for all
pairs $i \neq j$. 
 Choose one such sequence $x_1,\ldots,x_r$ such that furthermore
 $x_1,\ldots,x_r$ 
are distinct and $(x_i,x_j)$ is a Lebesgue point of $W'$ when $i \neq j$;
this is possible since the additional conditions hold almost surely. 
Let $m$ be a positive integer, and set $J_{i,\epsilon}=(x_i-\epsilon,x_i+\epsilon)$ for
$\epsilon>0$ and $i=1,\ldots,r$.
If $\epsilon$ is sufficiently small, then these intervals are disjoint
subintervals of $(0,1)$, 
and further, if $i \neq j$, then
\begin{equation}
  \label{b80}
\lambda\left(\left\{(x,y) \in J_{i,\epsilon} \times J_{j,\epsilon} :
W'(x,y)=\tfrac{1}{2} \right\}\right) 
> \lrpar{1-\tfrac1m} |J_{i,\epsilon}| \cdot |J_{j,\epsilon}|, 
\end{equation}
where $\lambda$ denotes the Lebesgue measure.

Take such an $\epsilon$ and let $W'_m$ be the graphon obtained by scaling the restriction of $W'$
to $(\bigcup_{i=1}^r J_{i,\epsilon}) \times (\bigcup_{i=1}^r
J_{i,\epsilon})$   to a graphon in the natural way, by mapping
$I_i:=(\frac{i-1}{r},\frac ir]$  linearly to $J_{i,\epsilon}$ for every
  $i=1,\ldots,r$.   
Then 
$$
p(F;W)=0 \implies p(F;W')=0 \implies p(F;W'_m)=0,
%\implies p(F;W_m)=0,
$$
for every graph $F$, and it follows from \refL{lem:hered} that 
 $W'_m \in \widehat{\QQ}$.

By construction and \eqref{b80}, if $i \neq j$,
\begin{equation}
\label{eq:b8}
\lambda\left(\left\{(x,y) \in I_i \times I_j : 
W'_m(x,y)=\tfrac{1}{2} \right\}\right) \ge \lrpar{1-\tfrac1m} |I_i|\cdot |I_j|.
\end{equation}
Regard $W'_m|_{I_i \times I_i}$ as a graphon (rescaling $I_i$ to $[0,1]$), and
choose a subsequence of $W'_m$ such that $W'_m|_{I_i \times I_i}$
converges for each $i$ to some limit $U_i$. 
It then follows from (\ref{eq:b8})
that $W'_m \to W^*$ along the subsequence, 
where $W^*=\frac{1}{2}$ on $I_i \times I_j$ when
$i \neq j$ and $W^*=U_i$ on $I_i \times I_i$. 
Thus $W^* \in \cqoo$. % \cap R_r$.

Furthermore, 
for any $n$, if $N$ is large enough, then by Ramsey's theorem, 
the random graph $G(N,U_i)$ contains a copy of $K_n$ or its complement
$\overline{K_n}$, and hence $G(n,U_i)$ equals $K_n$ or $\overline{K_n}$ with
positive probability. 
It follows easily, using \eqref{gnw}, that either $p(K_n;U_i)>0$ for all $n$
or $p(\overline{K_n};U_i)>0$ for all $n$ (or both). In the first case 
we may modify $W^*$ by replacing 
$U_i$ by the constant 1 on $I_i\times I_i$, and in the second case we may
instead replace $U_i$ by 0; using \refL{lem:hered} and \eqref{gnw}, it is
easily seen that the modification still belongs to $\cqoo$. 
Doing such a modification for each $i$, we obtain (possibly after a 
rearrangement of the intervals $I_i$) a graphon $\wwrs\in\cqoo$ for some $s$.
%with $0\le s\le r$.
\end{proof}

\begin{remark}
  We allow $r=1$ in \refL{lem:withclique}. 
Since no graphon is $K_1$-free, it then says that if $\cQ$ is any (infinite)
hereditary class of graphs, then $\cqoo$ contains some graphon in $R_1$,
\ie, some random-free graphon, and more precisely, 
at least one of the constant graphons $\wwx{1,0}=0$
and $\wwx{1,1}=1$. 
(The proof above is valid, but may be much simplified in this case. 
See also \cite{SJ255}.)
\end{remark}

\begin{lemma}
\label{lem:entropy_cliquefree}
Let $1\le r<\infty$. 
If the randomness support graphon of $W$ is $K_{r+1}$-free, then
$\Ent(W) \le 1-\frac{1}{r}$ with equality if and only if $W \in R_r$
up to equivalence.
\end{lemma}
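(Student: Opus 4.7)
The plan is to bound $\Ent(W)$ by two successive estimates and then track when each is tight. First, I would use the pointwise bound $h(t) \le 1 = h(1/2)$ together with $h(0) = h(1) = 0$ to obtain
\begin{equation*}
  h(W(x,y)) \le \etta_{\rand(W)}(x,y) \qquad \text{for almost every } (x,y),
\end{equation*}
and integrating gives
\begin{equation*}
  \Ent(W) = \iint h(W(x,y)) \dd x \dd y \le \iint \etta_{\rand(W)}(x,y) \dd x \dd y.
\end{equation*}
Since $\etta_{\rand(W)}$ is by hypothesis a $K_{r+1}$-free graphon, the Erd{\H o}s--Simonovits stability result \refT{thm:ErdosSimonovits} gives
\begin{equation*}
  \iint \etta_{\rand(W)} \le 1 - \tfrac{1}{r},
\end{equation*}
which combined with the previous display yields the desired upper bound $\Ent(W) \le 1 - 1/r$.

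For the equality case, I would argue that equality forces both estimates above to be tight. Tightness of the first requires $h(W(x,y)) = \etta_{\rand(W)}(x,y)$ a.e., and since $h(t) = 1$ on $(0,1)$ only at $t = 1/2$, this forces $W(x,y) = 1/2$ for a.e.\ $(x,y) \in \rand(W)$. Tightness of the second is exactly the equality case of \refT{thm:ErdosSimonovits}, which forces $\etta_{\rand(W)}$ to be equivalent to $W_{K_r}$. The plan is then to use the uniqueness statement \refeq{eq:graphon_uniquenss}: after replacing $W$ by an equivalent graphon (pulling back by the measure-preserving maps witnessing equivalence of $\etta_{\rand(W)}$ and $W_{K_r}$), we may assume $\rand(W) = E_r$ up to a null set. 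On $E_r$ we have $W = 1/2$ a.e., while off $E_r$ we have $W \in \{0,1\}$ a.e.\ (by definition of the randomness support). This is exactly the definition of $W \in R_r$.

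For the converse, if $W \in R_r$, then the computation \refeq{eq:TuranGraphons} gives $\Ent(W) = |E_r| = 1 - 1/r$, closing the equivalence.

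I expect the only delicate point to be the transfer of the equivalence of the randomness support graphon to $W$ itself. Concretely, if $\gs$ is a measure-preserving map with $\etta_{\rand(W)}(\gs(x),\gs(y)) = W_{K_r}(x,y)$ a.e., then pulling $W$ back along $\gs$ produces an equivalent graphon whose randomness support is exactly $E_r$; this is where one has to be mildly careful that the pointwise identity $W = 1/2$ on $\rand(W)$ transports correctly under the measure-preserving rearrangement, but this is automatic since $\gs$ is measure-preserving. Modulo this bookkeeping, the argument is a clean concatenation of the entropy bound and \refT{thm:ErdosSimonovits}.
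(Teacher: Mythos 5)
Your argument is essentially the paper's: the same pointwise bound $h(W(x,y)) \le \etta_{\rand(W)}(x,y)$ (from $h\le 1$ and $h(0)=h(1)=0$), the same invocation of Theorem~\ref{thm:ErdosSimonovits} on the randomness support graphon, and the same reading-off of the equality conditions. So the plan is sound and matches the paper's route.

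The one point you flag as delicate is exactly where you are not quite rigorous. You want a single measure-preserving map $\gs$ with $\etta_{\rand(W)}(\gs(x),\gs(y)) = W_{K_r}(x,y)$ a.e., and you cite \refeq{eq:graphon_uniquenss} for it. But \refeq{eq:graphon_uniquenss} only produces a \emph{third} graphon $U$ and two (generally non-invertible) measure-preserving maps $\sigma_1,\sigma_2$ with $\etta_{\rand(W)} = U\circ(\sigma_1\times\sigma_1)$ and $W_{K_r} = U\circ(\sigma_2\times\sigma_2)$; that does not by itself give a one-sided composition of $\etta_{\rand(W)}$ with a measure-preserving map landing on $W_{K_r}$, and hence does not immediately let you ``pull $W$ back along $\gs$''. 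The paper sidesteps this by appealing to Pikhurko's Lemma~23 \cite{Pikhurko10}, which gives the equality case of the stability theorem in the stronger form that $\etta_{\rand(W)}$ \emph{equals} $W_{K_r}$ up to a measure-preserving bijection of $\oi$; with an honest bijection the transfer of ``$W=\frac12$ on $\rand(W)$, $W\in\{0,1\}$ off $\rand(W)$'' to a graphon with randomness support exactly $E_r$ is immediate, precisely as you describe. So: same proof, but you should cite the sharper equality statement of \cite{Pikhurko10} (or otherwise argue that equivalence to a step graphon like $W_{K_r}$ can be upgraded to a one-sided measure-preserving rearrangement), rather than the two-sided uniqueness theorem \refeq{eq:graphon_uniquenss}.
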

\begin{proof}

Let $W'(x,y)$ by the randomness support graphon of $W$.
By assumption, $W'$ 
is $K_{r+1}$-free, 
so by Theorem~\ref{thm:ErdosSimonovits}, we have $\iint W'\le 1-\frac{1}{r}$.
Moreover since $h(x) \le 1$ always, and $h(x)=0$ when $x \in \{0,1\}$, we have
$h(W(x,y)) \le W'(x,y)$ for all $x,y$ and thus
$$\Ent(W) = \iint h(W) \le \iint W' \le 1 - \frac{1}{r},$$
with equality holding only if $W=\frac{1}{2}$ almost everywhere on its
randomness support and 
$W'$ is equivalent to $W_{K_r}$, which implies that 
$W$ is equivalent to a graphon in $R_r$.
(For a rigorous proof of the latter fact, we may use 
\cite[Lemma 23]{Pikhurko10}, which implies that $W'$ \aex{} equals $W_{K_r}$ 
up to a measure-preserving bijection of $\oi$.)

Conversely, it is obvious that $\Ent(W)=1-\frac{1}{r}$ for every $W \in
R_r$,
see \eqref{eq:TuranGraphons}.
\end{proof}

\begin{proof}[Proof of \refT{thm:TR}]
 Let $r \le \infty$ be defined by (\ref{eq:define_r}). If $r<\infty$, then every $W \in
\widehat{\QQ}$ has a randomness support graphon that is $K_{r+1}$-free and thus
$\Ent(W) \le 1-\frac{1}{r}$ by Lemma~\ref{lem:entropy_cliquefree}. 
Moreover,  there exists $W \in
\widehat{\QQ}$ with a randomness support graphon that is not $K_r$-free and thus
by Lemma~\ref{lem:withclique} there exists $W' \in \widehat{\QQ}$ with $W'
\in R_r$, which by \eqref{eq:TuranGraphons} implies
$\Ent(W')=1-1/r$.
Hence,
\begin{equation}
  \label{hz}
%\Ent(\widehat{\QQ}) := 
\max_{W \in \widehat{\QQ}} \Ent(W)=1-\frac{1}{r}, 
\end{equation}
and by Lemma~\ref{lem:entropy_cliquefree} the maximum is attained only for
$W \in \widehat{\QQ} \cap R_r$. 
Thus $\widehat{\QQ}^* = \widehat{\QQ} \cap R_r$.
Furthermore, by \refL{lem:withclique}, some $\wwrs\in\cqoo$, and
$\wwx{t,s}\notin\cqoo$ for $t>r$ since $\Ent(\wwx{t,s})=1-1/t>1-1/r$; hence
\eqref{eq:r2} holds.

If $r=\infty$, there is for every $s<\infty$ a graphon in $\widehat{\QQ}$
whose randomness support graphon is not $K_s$-free and thus
Lemma~\ref{lem:withclique} shows that 
there exists a graphon $W_s \in \widehat{\QQ} \cap R_s$. 
But then $W_s$ converges to the constant graphon $\frac{1}{2}$ in cut norm
(and even in $L^1$)  
as $s\to\infty$. Thus the constant graphon
$\frac{1}{2}$ belongs to $\widehat{\QQ}$. 
Since $\Ent(\frac12)=1$, it follows that 
$\max_{W \in \widehat{\QQ}} \Ent(W)=1$, \ie, 
\eqref{hz} holds in the case $r=\infty$ too.
Moreover, $\frac12$ is the only graphon with entropy 1, see \refR{R1},
and thus $\cqoo^*=\set{\frac12}=R_\infty$.

We have shown \eqref{hz} for any $r\le\infty$, and thus
(\ref{eq:thm_TR}) follows by Theorem~\ref{thm:T4}.

By \eqref{hz}, $r=1$ if and only if $\Ent(W)=0$ for every $W\in\cqoo$, \ie,
if and only if every $W\in\cqoo$ is random-free, which by definition 
means that $\cQ$ is random-free.

Finally, if $r=\infty$, then we have established that $\frac{1}{2} \in
\widehat{\QQ}$. Since $p(F;\frac12)>0$ for every $F$ by \eqref{phw}, 
Lemma~\ref{lem:hered}  shows that $\QQ$ is the class of all graphs.
Hence, every graphon belongs to $\cqoo$, so \eqref{eq:r2} holds  
trivially in this case too.
\end{proof}

%\bibliographystyle{alpha}
%\bibliography{entropy}

\newcommand{\etalchar}[1]{$^{#1}$}

\end{document}